\newtheorem{definition}{Definition}[section]
\newtheorem{lemma}[definition]{Lemma}
\newtheorem{theorem}[definition]{Theorem}
\title{\bf Unsteady 3D-Navier-Stokes System with Tresca's Friction Law}
\author{ 
 {Mahdi Boukrouche\thanks{Corresponding author Phone +33 4 77 48 15 35,
  E-mail: Mahdi.Boukrouche@univ-st-etienne.fr}, 
  Imane Boussetouan
  and  Laetitia Paoli
  }
\thanks{Address for all : Lyon University, F-42023 Saint-Etienne,
Institut Camille Jordan CNRS UMR 5208,
23 rue  Paul Michelon 42023 Saint-Etienne Cedex 2, France. 
  imane.boussetouan@univ-st-etienne.fr,
 laetitia.paoli@univ-st-etienne.fr}
 }
\date{}
\begin{document}

\maketitle\normalsize

\begin{abstract}
Motivated by extrusion problems, we consider a non-stationary incompressible 3D fluid flow with a
non-constant (temperature dependent) viscosity, subjected to mixed boundary conditions with a given
time dependent velocity on a part of the boundary and Tresca's friction law on the other part.
 We construct a sequence of approximate solutions by using a regularization of the free boundary
condition due to friction combined with a particular penalty method, reminiscent of the ``incompressibility limit'' of compressible fluids, allowing to get better insights
 into the links between the fluid velocity and pressure fields. Then we pass to the limit with
compactness arguments to obtain a solution to our original problem.
\end{abstract}

{\bf Keywords:} Navier-Stokes system ,\,  Tresca's friction law, \,  variational inequality,
\, penalty method.

\section{Introduction}\label{Introduction}

Fluid flow problems are involved in several physical phenomena and play an important role in many
 industrial applications. The fundamental model in fluid mechanics is the well-known Navier-Stokes
 system for incompressible viscous fluids which has been intensively studied during the last 78 years.
 Since the pioneering work of J.~Leray \cite{leray} in 1934, the mathematical analysis of this
problem has performed significant progresses: we can mention here only few selected references
 \cite{lions2, girault, temam, boyer, foias, galdi}. Nevertheless it is still a very active research
field, from both the theoretical point of view and the numerical point of view (see for instance
the very recent research articles \cite{bemelmans, physicaD1, physicaD2, zhang}).

Motivated by   extrusion problems 
we consider in this paper a non-stationary incompressible  3D fluid flow
with a temperature dependent viscosity. As usual for this kind of problems the extrusion device is
composed of an upper fixed part and a  lower moving part. Several experiments have shown that the
classical adhesion condition between the fluid and the  lower moving part of the boundary of its domain
 is not satisfied and  the real behavior seems to be governed by some friction condition of Tresca's type
\cite{hervet}  \cite{baffico}.

More precisely, let $\omega$ be a non empty open bounded subset with a  Lipschitz continuous
boundary,  of $\mathbb{R}^{d-1}$ for $d=2, 3$.
We denote by $\Omega\subset \mathbb{R}^{d}$ the domain of the flow  given by
\begin{eqnarray*}
\Omega = \bigl\{(x',x_{d})\in \mathbb{R}^{d}:\quad x'\in\omega,\quad 0< x_{d}< h(x')\bigr\},
\end{eqnarray*}
where $x'= (x_{1},\ldots,x_{d-1})\in \mathbb{R}^{d-1}$, $x=(x',x_{d})\in \mathbb{R}^{d}$.
 The boundary of $\Omega$ is $\partial\Omega=\Gamma_0 \cup \Gamma_{L}\cup \Gamma_{1}$,
where
$\Gamma_0=\{(x',x_{d})\in\overline{\Omega}: x_{d}=0\}$, $\Gamma_{1}=\{(x',x_{d})\in\overline{\Omega}:
 x_{d}=h(x')\}$ and $\Gamma_{L}$ is the lateral boundary. We assume that $h$ is a  Lipschitz continuous function 
 and there exist two real numbers $h_{min}$ and  $h_{max}$ such that $0< h_{min}< h(x')< h_{max}$
 for all $x'\in\mathbb{R}^{d-1}$.

Let us emphasize that we do not introduce any restrictive assumption on the thickness of the domain.
On the contrary to previous papers where only thin films where studied \cite{rao, bouk1, bouk2},
 we can consider here general 3D geometries.

The fluid flow is described by the conservation of momentum
\begin{eqnarray*}
\frac{\partial v}{\partial t}+ (v.\nabla) v= div(\sigma)+ f\quad\mbox{in}\quad\Omega\times(0,\tau),
\end{eqnarray*}
and  the incompressibility  condition
\begin{eqnarray*}
div(v) = 0 \quad\mbox{in}\quad\Omega\times(0,\tau),
\end{eqnarray*}
where $v$ is the velocity field of the fluid flow,
$f$ represents the density of body forces and $\sigma$ is the stress tensor. We assume that the fluid is Newtonian, so
\begin{eqnarray*}
\sigma = -pI + 2\mu(T)D(v),
\end{eqnarray*}
where $T$ depending on $(x,t) \in \Omega \times (0,\tau)$, is the temperature field.             
Note that $T$ stands for temperature but it will not appear as a variable
of the problem, the time interval on which the equations are considered is :
$[0 , \tau]$. We do this for the main reason that we have  generalised our work to a
coupled problem (velocity-pressure-temperature)  which is in final version,
so here we give the regularity of $T$ suitable for our coupled problem \cite{MLIC}.
 $\mu (T)$ is the temperature dependent viscosity of the fluid, $p$ is the pressure and   $D(v)$ is the strain rate  tensor given by
$$D(v)= \bigl( d_{ij}(v) \bigr)_{1 \le i,j \le d}, \quad d_{ij}(v)=\frac{1}{2}\left(\frac{\partial v_{i}}{\partial x_{j}}+\frac{\partial v_{j}}{\partial x_{i}} \right)\, \quad 1\leq i,j\leq d.$$
Hence $v$ and $p$ satisfy the Navier-Stokes system
\begin{eqnarray}
\label{NS2}
\frac{\partial v}{\partial t}+(v.\nabla)v-2div \bigl(\mu (T) D(v) \bigr)+\nabla p
=f \quad\mbox{in}\quad\Omega\times(0,\tau),
\end{eqnarray}
\begin{eqnarray}
\label{NS9}
div (v)=0 \quad\mbox{in}\quad\Omega\times(0,\tau),
\end{eqnarray}
with the initial condition
\begin{eqnarray}\label{NS10}
v(0, \cdot) = v_{0}\quad\mbox{in}\quad  \Omega.
\end{eqnarray}

Let us now describe the boundary conditions. We denote by $s: \Gamma_0 \to \mathbb{R}^{d-1}$
the shear velocity of the lower surface of the extrusion device at $t=0$ and by $s \zeta (t)$,
with $\zeta: [0, \tau] \to \mathbb{R}$ such that $\zeta(0)=1$, its velocity at any instant $t \in [0, \tau]$.
 We introduce a  function $g: \partial \Omega \to \mathbb{R}^d$ such that
\begin{eqnarray*}
\int_{\Gamma_{L}}g. n \,d\sigma =0, \quad g=0 \  \mbox{on}\  \Gamma_1 ,
\quad g_{n} = g \cdot n = 0 \  \mbox{and}\  g_{\tau} = g- g_{n} n =s \  \mbox{on}\  \Gamma_0,
\end{eqnarray*}
where $n=(n_{1},\ldots, n_{d})$ is the unit outward normal  vector to  $\partial \Omega$.
We denote here by $u \cdot w$ the Euclidean inner product of two vectors $u$ and $w$  and by $|.|$ the Euclidian norm. We define respectively the normal and the tangential velocities on $\Gamma_0$ by
\begin{eqnarray*}
v_{n}=v\cdot n = v_{i} n_{i},\quad v_{{\cal T}} = \bigl( v_{{\cal T}i} \bigr)_{1 \le i \le d} \  \mbox{with} \  \ v_{{\cal T}i}=v_i -v_{n} n_i  \quad  1 \le i \le d
\end{eqnarray*}
and the normal and the tangential components of the stress tensor on $\Gamma_0$ by
\begin{eqnarray*}
\sigma_{n} = (\sigma\cdot n)\cdot n = \sigma_{ij} n_{i} n_{j},\quad \sigma_{{\cal T}} = \bigl( \sigma_{{\cal T } i}\bigr)_{1 \le i \le d}
\  \mbox{with} \    \ \sigma_{{\cal T} i}=\sigma_{ij} n_{j}-\sigma_{n} n_{i}  \quad 1 \le i \le d.
\end{eqnarray*}
Note that we will use  the Einstein's summation convention throughout this paper.
We assume that the upper surface of the extrusion device is fixed i.e.
\begin{eqnarray}\label{NS11}
v = 0\quad\mbox{on}\quad\Gamma_{1} \times (0, \tau),
\end{eqnarray}
the given velocity on the lateral boundary is the product $g(x)\zeta(t)$ i.e.
\begin{eqnarray}\label{NS11L1}
v=g\zeta\quad\mbox{on}\quad\Gamma_{L} \times (0, \tau),
\end{eqnarray}
and the normal component of the velocity on the lower part of  boundary is given by
\begin{eqnarray}\label{NS13}
v_{n}=v\cdot n =0\quad\mbox{on}\quad\Gamma_0 \times (0, \tau).
\end{eqnarray}
The tangential velocity on $\Gamma_0 \times (0, \tau)$ is unknown and satisfies  Tresca's friction law \cite{duvaut1}
\begin{equation}\label{NS14}
\begin{array}{l}
\displaystyle{|\sigma_{{\cal T}}|< \ell \Rightarrow v_{{\cal T}} = (s\zeta, 0) }\\
\displaystyle{|\sigma_{{\cal T}}| = \ell \Rightarrow \exists\lambda\geq0\quad\mbox{such that}\quad v_{{\cal T}}=(s\zeta, 0) -\lambda\sigma_{{\cal T}}}
\end{array}
\end{equation}
where $\ell : [0 , \tau ]\times \Gamma_0 \to  \mathbb{R}$ is the upper limit for the shear stress (i.e. $\ell$ is the Tresca's friction threshold).

\smallskip

The paper is organized as follows. In Section \ref{formulation} we introduce the functional framework
and the formulation of the problem as a variational inequality for the fluid velocity and pressure fields.
 In Section \ref{approximate_problems} we use a regularization of  Tresca's functional to obtain a sequence
 of approximate problems $(P_{\varepsilon})$ of Navier-Stokes type.

A classical technique to study such problems is to choose divergence free test-functions in order to
 ``kill'' the pressure terms then to solve the derived variational problem for the fluid velocity and
to get finally the pressure by applying abstract results of functional analysis
(see \cite{temam, simon99, galdi} for instance).
The major drawback of this technique is that the pressure appears as a by product. In order to get better
insights into the links between the velocity and pressure fields, we adopt in this paper another approach, reminiscent of the ``incompressiblity limit'' of compressible fluids. 

More precisely, following an idea of J.L.~Lions \cite{lions}, we relax the divergence free condition and
 we propose a sequence of penalized problems $(P_{\varepsilon}^{\delta})$.
In Section \ref{penalized_problem} we establish the existence  of  solutions to this family of problems
 $(P_{\varepsilon}^{\delta})_{\varepsilon>0, \delta >0}$ and we obtain some a priori estimates.
Next in Section \ref{approximate_pressure} we define a sequence of approximate pressures
$(p_{\varepsilon}^{\delta})_{\varepsilon>0, \delta >0}$ and we study its properties. By using functional spaces that are weaker in time than in space, we succeed in obtaining good enough  uniform estimates with respect to the   the parameters $\delta$ and $\varepsilon$.
Then in Section \ref{fin} we use compactness arguments to pass to the incompressible limit as $\delta$ tends to zero
 and we show that the limit velocity and pressure fields are solutions to the problems $(P_{\varepsilon})$.
 Finally  we pass to the limit as $\varepsilon$ tends to zero and we get a solution to our original
 variational inequality.

\section{Variational formulation of the problem}
\label{formulation}

We denote by
\begin{eqnarray*}
{\bf H}^{1}(\Omega) = \left(H^{1}(\Omega)\right)^{d},\
{\bf L}^{2}(\Omega) = (L^{2}(\Omega))^{d},\
{\bf H}^{\frac{1}{2}}(\partial\Omega) = (H^{\frac{1}{2}}(\partial\Omega))^{d},\ {\bf H}^{2}(\Omega) = \left(H^{2}(\Omega)\right)^{d}. 
\end{eqnarray*}
We assume  that
\begin{eqnarray}\label{NS14L}
\begin{array}{ll}
\displaystyle{f\in L^{2}\bigl(0,\tau;{\bf L}^{2}(\Omega) \bigr), \  \ell \in L^{2}\bigl(0,\tau; {\bf L}^{2}_+(\Gamma_0) \bigr) \cap L^{\infty}\bigl(0,\tau; {\bf L}^{\infty}_+(\Gamma_0) \bigr) ,}\\
\displaystyle{  \zeta \in {\cal C}^{\infty} \bigl( [0, \tau] \bigr) \ \mbox{with}\ \zeta (0)=1}, 
\end{array}
\end{eqnarray}
 with
${\bf L}^{2}_+(\Gamma_0)  = {\bf L}^{2}(\Gamma_0; \mathbb{R}^+ )$  (respectively  
${\bf L}^{\infty}_+(\Gamma_0)  = {\bf L}^{\infty}(\Gamma_0; \mathbb{R}^+ )$  and $\tau>0$.
The viscosity  $\mu (T)$ is a function of $L^{\infty} \bigl( 0, \tau; L^{\infty}(\Omega) \bigr)$
depending on the temperature $T$, and there exist two real numbers  $\mu^{*}$, $\mu_{*}$ such that
\begin{eqnarray}\label{NS20}
0< \mu^{*} \leq 2 \mu(X)\leq \mu_{*}\quad\forall X\in\mathbb{R},
\end{eqnarray}
and also  there exists an extension of $g$ to $\Omega$, denoted by $G_0$, such that
\begin{eqnarray}\label{G0HYPO}
\begin{array}{ll}
\displaystyle{G_{0}\in {\bf H}^{2}(\Omega), \  div  (G_{0})=0\ \mbox{in}\ \Omega,\  G_{0}=g\ \mbox{on}\ \Gamma_{L}, \
G_{0}=0\ \mbox{on}\ \Gamma_{1},}\\
\displaystyle{  G_{0 n} =0  \  \mbox{and}\  G_{0 \tau}=s\ \mbox{on}\  \Gamma_0.}
\end{array}
\end{eqnarray}
We introduce now the following functional framework
\begin{eqnarray*}
{\cal V}_{0} = \left\{\varphi\in {\bf H}^{1}(\Omega):\  \varphi=0\ \mbox{on}\  \Gamma_{L}\cup\Gamma_{1}, \ \varphi_n=0\ \mbox{on}\ \Gamma_0\right\},
\end{eqnarray*}
endowed  with the norm of ${\bf H}^{1}(\Omega)$ and
\begin{eqnarray*}
{\cal V}_{0div} = \bigl\{\varphi\in {\cal V}_0:\ div (\varphi) =0\ \mbox{in}\ \Omega\bigr\}.
\end{eqnarray*}
Moreover let
\begin{eqnarray*}
L^{2}_{0}(\Omega) = \left\{q\in  L^{2}(\Omega):\ \int_{\Omega} q\,dx = 0\right\}.
\end{eqnarray*}
 We define the following applications
\begin{eqnarray*}
a (T; \cdot , \cdot) : L^2 (0, \tau; {\cal V}_{0}) \times  L^2(0, \tau; {\cal V}_{0}) &\rightarrow& \mathbb{R}\\
(u,v)&\mapsto& a(T;u,v)=
 \int_0^\tau \int_{\Omega} 2 \mu(T) D(u) : D(v) \, dx  dt \\
& \qquad & \qquad \qquad  =
\int_0^\tau \int_{\Omega} 2 \mu(T) d_{ij}(u)d_{ij}(v)\,dx  dt,
\end{eqnarray*}
and
\begin{eqnarray*}
\Psi :\quad L^2 \bigl(0,\tau; {\bf L}^2 (\Gamma_0) \bigr) &\rightarrow& \mathbb{R}\\
u&\mapsto& \Psi(u)= \int_0^\tau \int_{\Gamma_0} \ell |u|\,dx' dt .
\end{eqnarray*}
We may observe that  $\Psi$ is convex continuous but not differentiable.

Let $b$ be the usual trilinear form given by
\begin{eqnarray*}
b:\quad {\cal V}_{0}\times {\cal V}_{0}\times {\cal V}_{0} &\rightarrow& \mathbb{R}\\
(u,v,w)&\mapsto& b(u,v,w)=\int_{\Omega}u_{i}\frac{\partial v_{j}}{\partial x_{i}}w_{j}\,dx.
\end{eqnarray*}
By definition of ${\cal V}_{0}$ we have the identity
\begin{eqnarray} \label{formuleb}
b(u,v,w) = - b(u, w,v) - \int_{\Omega} div(u) v \cdot w \, dx  \quad \forall (u,v,w) \in {\cal V}_{0} \times {\cal V}_{0} \times {\cal V}_{0}.
\end{eqnarray}
Moreover, using Korn's inequality \cite{Korn} and assumption (\ref{NS20}), there exists $\alpha>0$ such that, for almost every $t \in (0, \tau)$, we have
\begin{eqnarray}\label{coercif}
\alpha \|u\|^2_{ {\bf H}^1 (\Omega)} \le \int_{\Omega} 2 \mu(T) D(u):D(u) \, dx  \le  \mu_* \|u\|^2_{ {\bf H}^1 (\Omega)} \quad \forall u \in  {\cal V}_{0}.
\end{eqnarray}

In order to deal with homogeneous boundary conditions on $\Gamma_L \cup \Gamma_1$, we set $\widetilde{v}=v-G_{0}\zeta$.
The variational formulation of the problem (\ref{NS2})-(\ref{NS14}) is given by (see for example \cite{duvaut1} and \cite{bouk1, bouk2})

\smallskip

\noindent {\bf Problem $(P)$}
Find
$$\widetilde{v}\in L^{2} \bigl(0,\tau;{\cal V}_{0div} \bigr)\cap L^{\infty}\bigl(0,\tau;{\bf L}^{2}(\Omega) \bigr),\  \,
\frac{\partial \widetilde{v}}{\partial t} \in L^{\frac{4}{3}} \bigl(0,\tau;({\cal V}_{0div})' \bigr) ,\  \,
p\in H^{-1}\bigl(0,\tau;L^{2}_{0}(\Omega) \bigr)$$
such that, for all $ \varphi\in {\cal V}_{0}$ and for all $\chi\in {\cal D}(0,\tau)$, we have
\begin{equation}\label{NS-25}
\begin{array}{ll}
\displaystyle
\left\langle \frac{d}{dt} \left( \widetilde{v}, \varphi \right) , \chi\right\rangle_{{\cal D}'(0,\tau), {\cal D}(0,\tau)}  +
 \bigl\langle b(\widetilde{v},\widetilde{v},\varphi) ,\chi \bigr\rangle_{{\cal D}'(0,\tau), {\cal D}(0,\tau)} - \bigl\langle \bigl(p,div(\varphi)  \bigr), \chi \bigr\rangle_{{\cal D}'(0,\tau), {\cal D}(0,\tau)} \\
\displaystyle + a(T;\widetilde{v},\varphi\chi)+\Psi(\widetilde{v}+ \varphi \chi )-\Psi(\widetilde{v})
\displaystyle  \geq
\bigl\langle (f,\varphi ), \chi \bigr\rangle_{{\cal D}'(0,\tau), {\cal D}(0,\tau)} - a(T;G_{0}\zeta ,\varphi \chi) \\
\displaystyle - \left\langle \left(G_{0}\frac{\partial \zeta}{\partial t},\varphi \right),
\chi \right\rangle_{{\cal D}'(0,\tau), {\cal D}(0,\tau)} -  \bigl\langle b(G_{0}\zeta ,\widetilde{v}+G_{0}\zeta,\varphi),
\chi \bigr\rangle_{{\cal D}'(0,\tau), {\cal D}(0,\tau)} \\
\displaystyle - \bigl\langle b(\widetilde{v},G_{0}\zeta ,\varphi) , \chi \bigr\rangle_{{\cal D}'(0,\tau), {\cal D}(0,\tau)}
\end{array}
\end{equation}
with the initial condition
\begin{eqnarray}\label{NS-25init}
\widetilde{v}(0, \cdot) = \widetilde{v}_{0} \in  {\bf H},
\end{eqnarray}
where ${\bf H}$ is the well known closure in ${\bf L}^{2}(\Omega)$ of the space 
$$\{ \varphi \in {\cal C}^{\infty}(\overline{\Omega}) : \, div \, \varphi= 0  \mbox{ in }\Omega\},$$
$(\cdot, \cdot)$ denotes the inner product in ${\bf L}^2(\Omega)$ and $\langle \cdot, \cdot \rangle_{{\cal D}'(0,\tau), {\cal D}(0,\tau)}$ the duality product between ${\cal D}'(0,\tau)$ and ${\cal D}(0, \tau)$. Let us emphasize that we identify $\widetilde{v} + \varphi \chi $ and $\widetilde{v} $ with their trace on $\Gamma_0$ in the definition of $\Psi(\widetilde{v}+ \varphi \chi )$ and $\Psi(\widetilde{v})$.

\section{Approximate problems}
\label{approximate_problems}

The variational formulation of the problem (\ref{NS2})-(\ref{NS14}) leads to an inequality involving  Tresca's functional $\Psi$, which is convex continuous but not differentiable.
To overcome this  difficulty we use a regularization of $\Psi$. More precisley, for any $\varepsilon>0$, we introduce  $\Psi_{\varepsilon}$ defined by
\begin{eqnarray*}
 \Psi_{\varepsilon}(u)= \int_0^\tau \int_{\Gamma_0} \ell \sqrt{\varepsilon^{2} + |u|^{2}}\,dx' dt  \quad \forall u \in
 L^2 \bigl(0,\tau; {\bf L}^2(\Gamma_0) \bigr)
\end{eqnarray*}
which is G\^ateaux-differentiable in $L^2 \bigl(0,\tau; {\bf L}^2(\Gamma_0) \bigr) $,
with $\Psi'_{\varepsilon}(u) \in \bigl( L^2 \bigl(0,\tau; {\bf L}^2(\Gamma_0) \bigr) \bigr)'= L^2 \bigl(0,\tau; {\bf L}^2(\Gamma_0) \bigr) $
for all $u \in L^2 \bigl(0,\tau; {\bf L}^2(\Gamma_0) \bigr) $
given by
\begin{eqnarray*}
\langle \Psi'_{\varepsilon}(u),w \rangle = \int_0^{\tau} \int_{\Gamma_0} \ell \frac{u \cdot w}{\sqrt{\varepsilon^{2} + |u|^{2}}}\,dx' dt \quad \forall  w \in L^2 \bigl(0,\tau; {\bf L}^2(\Gamma_0) \bigr)
\end{eqnarray*}
where $\langle \cdot, \cdot \rangle$ denotes the inner product in $L^2 \bigl(0,\tau; {\bf L}^2(\Gamma_0) \bigr) $.
Then we consider a sequence of initial data $(\widetilde{v}_{{\varepsilon} 0})_{\varepsilon>0}$ such that
\begin{eqnarray} \label{init1}
\displaystyle \widetilde{v}_{{\varepsilon} 0} \longrightarrow_{\varepsilon \to 0} \widetilde{v}_{0} \quad \hbox{\rm strongly  in {\bf H}}
\end{eqnarray}
and we approximate problem $(P)$ by the following problems $(P_{\varepsilon})$, $\varepsilon >0$:

\smallskip

\noindent {\bf Problem $(P_{\varepsilon})$}
Find
$$\widetilde{v}_{\varepsilon}\in L^{2} \bigl(0,\tau;{\cal V}_{0div} \bigr)\cap L^{\infty}\bigl(0,\tau; {\bf L}^{2}(\Omega) \bigr),\  \, 
\frac{\partial \widetilde{v_{\varepsilon}}}{\partial t}\in L^{\frac{4}{3}}\bigl(0,\tau;({\cal V}_{0div})' \bigr),\ \,
p_{\varepsilon} \in H^{-1}\bigl(0,\tau;L^{2}_{0}(\Omega) \bigr)$$
such that, for all $ \varphi\in {\cal V}_{0}$ and for all $\chi\in {\cal D}(0,\tau)$, we have
\begin{equation}\label{NS-25bis}
\begin{array}{ll}
\displaystyle
\left\langle \frac{d}{dt} \left( \widetilde{v}_{\varepsilon}, \varphi \right) , \chi\right\rangle_{{\cal D}'(0,\tau), {\cal D}(0,\tau)}  +
 \bigl\langle b(\widetilde{v}_{\varepsilon},\widetilde{v}_{\varepsilon},\varphi) ,\chi \bigr\rangle_{{\cal D}'(0,\tau), {\cal D}(0,\tau)} - \bigl\langle \bigl(p_{\varepsilon},div(\varphi) \bigr), \chi \bigr\rangle_{{\cal D}'(0,\tau), {\cal D}(0,\tau)} \\
\displaystyle + a(T;\widetilde{v}_{\varepsilon},\varphi\chi)
+  \bigl\langle \Psi'_{\varepsilon}(\widetilde{v}_{\varepsilon}), \varphi  \chi \bigr\rangle
\displaystyle  =
\bigl\langle (f,\varphi ), \chi \bigr\rangle_{{\cal D}'(0,\tau), {\cal D}(0,\tau)} - a(T;G_{0}\zeta ,\varphi \chi)\\
\displaystyle
- \left\langle \left(G_{0}\frac{\partial \zeta}{\partial t},\varphi \right), \chi \right\rangle_{{\cal D}'(0,\tau), {\cal D}(0,\tau)}
  -  \bigl\langle b(G_{0}\zeta ,\widetilde{v}_{\varepsilon}+G_{0}\zeta,\varphi), \chi \bigr\rangle_{{\cal D}'(0,\tau), {\cal D}(0,\tau)} \\\displaystyle- \bigl\langle b(\widetilde{v}_{\varepsilon},G_{0}\zeta ,\varphi) , \chi \bigr\rangle_{{\cal D}'(0,\tau), {\cal D}(0,\tau)}
\end{array}
\end{equation}
with the initial condition
\begin{eqnarray}\label{NS-25bisinit}
\widetilde{v}_{\varepsilon}(0, \cdot) = \widetilde{v}_{{\varepsilon} 0} \in  {\bf H}.
\end{eqnarray}

As it has been explained in Section \ref{Introduction}, a classical technique to solve such problems consists in choosing divergence free test-functions. Indeed if $\varphi \in {\cal V}_{0div}$, the term  $\bigl\langle \bigl(p_{\varepsilon},div(\varphi) \bigr), \chi \bigr\rangle_{{\cal D}'(0,\tau), {\cal D}(0,\tau)}$ vanishes and we simply get a variational problem for the fluid velocity $\widetilde{v_{\varepsilon}}$. Then the existence of  $p_{\varepsilon} \in H^{-1}\bigl(0,\tau;L^{2}_{0}(\Omega) \bigr)$ is derived via abstract results of functional analysis (see \cite{temam, simon99, galdi} for instance).

With this technique  the pressure appears as a by product of the study. In order to get better insights into the links between the velocity and pressure fields, we will follow an idea proposed by J.L.~Lions in \cite{lions} and recently used  in \cite{boukpaoli}, which consists in relaxing the divergence free condition. More precisely, we
 consider the following penalized problems $(P_{\varepsilon}^{\delta})$, $\delta >0$, $\varepsilon>0$:

\smallskip

\noindent {\bf Problem $(P_{\varepsilon}^{\delta})$}
Find
$$\widetilde{v}_{\varepsilon}^{\delta}\in L^{2} \bigl(0,\tau;{\cal V}_{0} \bigr)\cap L^{\infty}\bigl(0,\tau;{\bf L}^{2}(\Omega) \bigr) 
, \quad  \frac{ \partial \widetilde{v}_{\varepsilon}^{\delta}}{\partial t} \in L^{\frac{4}{3}} \bigl(0,\tau; ({\cal V}_{0})' \bigr)
$$
 such that, for all $ \varphi\in {\cal V}_{0}$ and for all $\chi\in {\cal D}(0,\tau)$, we have
\begin{equation}\label{NS33}
\begin{array}{ll}
\displaystyle
\left\langle \frac{d}{dt} \left( \widetilde{v}_{\varepsilon}^{\delta}, \varphi \right) , \chi\right\rangle_{{\cal D}'(0,\tau), {\cal D}(0,\tau)}  +
 \bigl\langle b(\widetilde{v}_{\varepsilon}^{\delta},\widetilde{v}_{\varepsilon}^{\delta},\varphi) ,\chi \bigr\rangle_{{\cal D}'(0,\tau), {\cal D}(0,\tau)}\\
\displaystyle +\frac{1}{2}\bigl\langle \int_{\Omega}
\widetilde{v}_{\varepsilon}^{\delta}div(\widetilde{v}_{\varepsilon}^{\delta}) \varphi \, dx , \chi \bigr\rangle_{{\cal D}'(0,\tau), {\cal D}(0,\tau)}  +\frac{1}{\delta}\bigl\langle \bigl( div(\widetilde{v}_{\varepsilon}^{\delta}), div (\varphi ) \bigr), \chi  \bigr\rangle_{{\cal D}'(0,\tau), {\cal D}(0,\tau)}
\displaystyle + a(T;\widetilde{v}_{\varepsilon}^{\delta},\varphi\chi)\\
\displaystyle
+ \bigl\langle \Psi'_{\varepsilon}(\widetilde{v}_{\varepsilon}^{\delta}) , \varphi \chi \bigr\rangle
=
\bigl\langle (f,\varphi ), \chi \bigr\rangle_{{\cal D}'(0,\tau), {\cal D}(0,\tau)} - a(T;G_{0}\zeta ,\varphi \chi)
- \left\langle \left(G_{0}\frac{\partial \zeta}{\partial t},\varphi \right), \chi \right\rangle_{{\cal D}'(0,\tau), {\cal D}(0,\tau)}
\\
\displaystyle  -  \bigl\langle b(G_{0}\zeta ,\widetilde{v}_{\varepsilon}^{\delta}+G_{0}\zeta,\varphi), \chi \bigr\rangle_{{\cal D}'(0,\tau), {\cal D}(0,\tau)} - \bigl\langle b(\widetilde{v}_{\varepsilon}^{\delta},G_{0}\zeta ,\varphi) , \chi \bigr\rangle_{{\cal D}'(0,\tau), {\cal D}(0,\tau)}
\end{array}
\end{equation}
with the initial condition
\begin{eqnarray}\label{NS33init}
\widetilde{v}_{\varepsilon}^{\delta}(0, \cdot) = \widetilde{v}_{{\varepsilon} 0}^{\delta} \in {\bf L}^{2}(\Omega)
\end{eqnarray}
and we assume that the  sequence of initial data $(\widetilde{v}_{{\varepsilon} 0}^{\delta})_{\delta>0}$ satisfies
\begin{eqnarray} \label{init2}
\displaystyle \widetilde{v}_{{\varepsilon} 0}^{\delta} \longrightarrow_{\delta \to 0} \widetilde{v}_{\varepsilon 0}
\quad \hbox{\rm strongly in } {\bf L}^{2}(\Omega).
\end{eqnarray}

Let us emphasize that the last term of the first line of (\ref{NS33})  is added for technical reasons
 (see (\ref{tech})) while the  first term of the second line is the penalty term:
$\displaystyle - \frac{1}{\delta}  div(\widetilde{v}_{\varepsilon}^{\delta})$ will play the role of
an approximate pressure (see Section \ref{approximate_pressure}). Furthermore, the approximate initial velocities $(\widetilde{v}_{{\varepsilon} 0}^{\delta})_{\varepsilon>0, \delta>0}$ and $(\widetilde{v}_{{\varepsilon} 0})_{\varepsilon >0}$ are not assumed to be more regular that $\widetilde{v}_0$.


\section{Existence result for the penalized problems $(P_{\varepsilon}^{\delta})$}
\label{penalized_problem}

We prove the existence of solutions for the system (\ref{NS33})-(\ref{NS33init}), for any $\varepsilon >0$ and $\delta>0$, by using the Galerkin method. Since ${\cal V}_{0}$ is a closed subspace of ${\bf H}^{1}(\Omega)$, it admits an Hilbertian basis $(w_{i})_{i\geq 1}$, which is  orthogonal for the inner product of  ${\bf H}^{1}(\Omega)$ and orthonormal for the inner product of ${\bf L}^{2}(\Omega)$.
Then, for all $m \ge 1$,  we look for a function $\widetilde{v}_{\varepsilon m}^{\delta}$ given by
\begin{eqnarray}\label{NS34}
\widetilde{v}_{\varepsilon m}^{\delta}(t,x) = \sum_{j=1}^{m} g_{\varepsilon j}^{\delta}(t)w_{j}(x),\quad  \forall t\in(0,\tau),\   \forall  x\in\Omega,
\end{eqnarray}
such that, for all $k \in \{1, \dots, m\}$, we have
\begin{equation}\label{NS35}
\begin{array}{ll}
\displaystyle
\left(\frac{\partial \widetilde{v}_{\varepsilon m}^{\delta}}{\partial t},w_{k}\right)+ b(\widetilde{v}_{\varepsilon m}^{\delta},\widetilde{v}_{\varepsilon m}^{\delta},w_{k})
+\frac{1}{2}\int_{\Omega}
 \widetilde{v}_{\varepsilon m}^{\delta}div(\widetilde{v}_{\varepsilon m}^{\delta}) w_{k} \, dx
+\frac{1}{\delta}\left(div(\widetilde{v}_{\varepsilon m}^{\delta}),div ( w_{k}) \right)
\\
\displaystyle + \int_{\Omega} 2 \mu(T) D(\widetilde{v}_{\varepsilon m}^{\delta}): D(w_k) \, dx
+ \int_{\Gamma_0} \ell \frac{\widetilde{v}_{\varepsilon m}^{\delta} \cdot w_k}{\sqrt{ \varepsilon^2 + |\widetilde{v}_{\varepsilon m}^{\delta}|^2}} \, dx'
=
(f,w_{k})\\
\displaystyle
- \int_{\Omega}  2 \mu(T) D(G_0 \zeta): D (w_k) \, dx
  - \left( G_{0} \frac{\partial \zeta}{\partial t},w_{k} \right)  -   b(G_{0}\zeta  , \widetilde{v}_{\varepsilon m}^{\delta} + G_{0} \zeta ,w_{k}) \\
  \displaystyle - b ( \widetilde{v}_{\varepsilon m}^{\delta}, G_0 \zeta, w_k)
\quad \mbox{\rm a.e. in } (0, \tau)
\end{array}
\end{equation}
with  the initial condition
\begin{eqnarray}\label{NS35-1}
\widetilde{v}_{\varepsilon m}^{\delta}(0, \cdot)=\widetilde{v}_{\varepsilon m 0}^{\delta}
\end{eqnarray}
where   $\widetilde{v}_{\varepsilon m 0}^{\delta}$ is defined as the orthogonal projection of $\widetilde{v}_{\varepsilon 0}^{\delta}$ in ${\bf L}^{2}(\Omega)$ on  ${\rm Span} \bigl\{w_{1}\ldots w_{m}\bigr\}$.
For all $i, j,k  \in \{1, \dots, m\}$ we denote
\begin{eqnarray*}
\displaystyle{ F_k=(f,w_{k}) -
\int_{\Omega} 2 \mu(T) D(G_0 \zeta): D(w_k) \, dx
- \left(G_{0}\frac{\partial \zeta}{\partial t},w_{k}\right)-   b(G_{0} \zeta,G_{0}\zeta ,w_{k})
 \in L^2(0,\tau) }
\end{eqnarray*}
and
\begin{eqnarray*}
A_{j,k}(T)= \int_{\Omega} 2 \mu(T) D(w_j) : D(w_k) \, dx
\in L^{\infty} (0, \tau), \quad B_{i,j,k}=b(w_{i},w_{j},w_{k}) \in \mathbb{R}.
\end{eqnarray*}
By replacing $\widetilde{v}_{\varepsilon m}^{\delta}$ by its expression (\ref{NS34}) in  equation (\ref{NS35}) and
using the orthonormality of $(w_{i})_{i \ge 1}$ in ${\bf L}^{2}(\Omega)$, we obtain
\begin{equation}\label{edo}
\begin{array}{ll}
\displaystyle{
(g_{\varepsilon k}^{\delta})'  + \sum_{i,j=1}^{m}g_{\varepsilon j}^{\delta}g_{\varepsilon i}^{\delta}B_{i,j,k}
+\frac{1}{2}\sum_{i,j=1}^{m}g_{\varepsilon i}^{\delta}g_{\varepsilon j}^{\delta}
\int_{\Omega} w_{i}div(w_{j}) w_{k} \, dx
+ \frac{1}{\delta}\sum_{j=1}^{m}g_{\varepsilon j}^{\delta}\bigl(div (w_{j}) ,div (w_{k}) \bigr)
} \\
\displaystyle{
+\sum_{j=1}^{m}g_{\varepsilon j}^{\delta}A_{j,k}(T)
+ \int_{\Gamma_0} \ell \frac{ \bigl( \sum_{j=1}^{m}g_{\varepsilon j}^{\delta}w_{j} \bigr)
\cdot w_k}{\sqrt{\varepsilon^2 + \bigl| \sum_{j=1}^{m}g_{\varepsilon j}^{\delta}w_{j} \bigr|^2}} \, dx'
 = F_{k}}\\\displaystyle{-\sum_{j=1}^{m} g_{\varepsilon j}^{\delta} b(G_{0}\zeta,w_{j},w_{k})
-\sum_{j=1}^{m}g_{\varepsilon j}^{\delta} b(w_{j},G_{0}\zeta,w_{k}) \quad \forall k \in \{1, \dots , m\}.
}
\end{array}
\end{equation}
We can rewrite this differential system as
\begin{eqnarray*}
\displaystyle{(g_{\varepsilon}^{\delta})' = {\cal G}(t,g_{\varepsilon}^{\delta}),
\quad g_{\varepsilon}^{\delta} = \bigl( g_{\varepsilon j}^{\delta} \bigr)_{1\le j \le m}}
\end{eqnarray*}
where ${\cal G}$ satisfies the assumptions of  the Caratheodory theorem (see \cite{coddington}).
 Moreover, the function ${\cal G}$ is locally Lipschitz continuous with respect its the second argument.
It follows that, for any given initial data,  the differential system (\ref{edo}) admits an unique maximal
solution  $g_{\varepsilon j}^{\delta}$ in $H^{1}(0,\tau_{m})$,  $1 \le j \le m$,
with $0<\tau_{m}\leq \tau$, which implies the  existence  of a maximal solution
$\widetilde{v}_{\varepsilon m}^{\delta} \in H^1 (0, \tau_m; {\cal V}_{0})$ to (\ref{NS35})-(\ref{NS35-1}).
In the following lemma, some a priori estimates independent of $m$, $\delta$ and  $\varepsilon$
will be established, which allow us to extend this solution to the whole interval $[0,\tau]$.

\begin{lemma}\label{lem4}
Assume that (\ref{NS14L}), (\ref{NS20}) and (\ref{G0HYPO})  hold and that  $(\widetilde{v}_{\varepsilon 0}^{\delta})_{\varepsilon>0, \delta>0}$ is a bounded sequence of  ${\bf L}^{2}(\Omega)$. The problem  (\ref{NS35})-(\ref{NS35-1}) admits a unique solution $\widetilde{v}_{\varepsilon m}^{\delta} \in H^1 \bigl(0, \tau; {\cal V}_{0} \bigr)$ which satisfies the following estimates
\begin{eqnarray}\label{NS36}
\|\widetilde{v}_{\varepsilon m}^{\delta} \|_{L^{\infty}(0,\tau; {\bf L}^{2}(\Omega))} \leq C
\end{eqnarray}
\begin{eqnarray}\label{NS37}
\|\widetilde{v}_{\varepsilon m}^{\delta}\|_{L^{2}(0,\tau; {\bf H}^1 (\Omega) )} \leq C
\end{eqnarray}
\begin{eqnarray}\label{NS37D}
\|div(\widetilde{v}_{\varepsilon m}^{\delta})\|_{L^{2}(0,\tau;L^{2}(\Omega))} \leq C\sqrt{\delta}
\end{eqnarray}
where $C$ is a  constant independent of   $m$,  $\delta$ and $\varepsilon$.
\end{lemma}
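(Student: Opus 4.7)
The Caratheodory discussion just before the lemma already yields a unique maximal solution $\widetilde{v}_{\varepsilon m}^{\delta}\in H^1(0,\tau_m;{\cal V}_0)$ of (\ref{NS35})-(\ref{NS35-1}) for some $\tau_m\in(0,\tau]$. The whole content of the lemma thus reduces to establishing a priori bounds uniform in $m$, $\varepsilon$, $\delta$; once available, they rule out finite-time blow-up of $g_\varepsilon^\delta=(g_{\varepsilon j}^\delta)_{1\le j\le m}$ and force $\tau_m=\tau$.

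To obtain these bounds I would multiply (\ref{NS35}) by $g_{\varepsilon k}^{\delta}(t)$ and sum over $k\in\{1,\dots,m\}$. Writing $u:=\widetilde{v}_{\varepsilon m}^{\delta}$ for brevity, this yields the energy identity
\begin{equation*}
\frac{1}{2}\frac{d}{dt}\|u\|_{{\bf L}^2}^2 + b(u,u,u) + \frac{1}{2}\int_{\Omega}|u|^{2}div(u)\,dx + \frac{1}{\delta}\|div(u)\|_{L^2}^{2} + \int_{\Omega}2\mu(T)|D(u)|^{2}\,dx + \int_{\Gamma_0}\ell\frac{|u|^{2}}{\sqrt{\varepsilon^{2}+|u|^{2}}}\,dx' = R(t),
\end{equation*}
where $R(t)$ gathers the right-hand side contributions of (\ref{NS35}). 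The second and third terms cancel exactly, since identity (\ref{formuleb}) with $v=w=u$ gives $b(u,u,u)=-\frac{1}{2}\int_{\Omega}div(u)|u|^{2}\,dx$; this is precisely the ``technical'' motivation for the extra term introduced in (\ref{NS33}). A similar integration by parts, exploiting $div(G_{0})=0$, $G_{0}\cdot n=0$ on $\Gamma_{0}$ and $u=0$ on $\Gamma_{L}\cup\Gamma_{1}$, shows $b(G_{0}\zeta,u,u)=0$, eliminating one cross-term in $R(t)$. The friction integral on $\Gamma_{0}$ is non-negative and may simply be dropped.

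The remaining pieces of $R(t)$ are controlled through Cauchy-Schwarz combined with assumptions (\ref{NS14L})-(\ref{G0HYPO}), while by Korn's inequality (\ref{coercif}) the viscous term dominates $\alpha\|u\|_{{\bf H}^1}^{2}$. The delicate step, and the main obstacle in three dimensions, is the remaining cross-term $b(u,G_{0}\zeta,u)$: since $G_{0}\in {\bf H}^{2}(\Omega)$ need not embed into $W^{1,\infty}$ for $d=3$, I would estimate
\begin{equation*}
|b(u,G_{0}\zeta,u)| \le |\zeta|\,\|\nabla G_{0}\|_{{\bf L}^{6}}\,\|u\|_{{\bf L}^{12/5}}^{2} \le C\|u\|_{{\bf L}^{2}}^{3/2}\|u\|_{{\bf H}^{1}}^{1/2}
\end{equation*}
by H\"older's inequality and interpolation between ${\bf L}^{2}$ and ${\bf L}^{6}$, using ${\bf H}^{1}\hookrightarrow {\bf L}^{6}$ and $\nabla G_{0}\in {\bf H}^{1}\hookrightarrow {\bf L}^{6}$; Young's inequality then absorbs a fraction of $\alpha\|u\|_{{\bf H}^{1}}^{2}$ into the viscous term and leaves only a $C\|u\|_{{\bf L}^{2}}^{2}$ contribution on the right.

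Integrating in time from $0$ to $t$ and invoking Gronwall's lemma together with the assumed ${\bf L}^{2}$-boundedness of $\widetilde{v}_{\varepsilon 0}^{\delta}$ yields (\ref{NS36}); reinserting that bound into the time-integrated coercivity estimate on $(0,\tau)$ delivers (\ref{NS37}); finally, isolating $\frac{1}{\delta}\|div(u)\|_{L^{2}(0,\tau;L^{2}(\Omega))}^{2}\le C$ and extracting the square root gives (\ref{NS37D}). None of the constants depend on $m$, $\varepsilon$ or $\delta$, the $L^{\infty}(0,\tau;{\bf L}^{2})$-bound prevents explosion of $g_{\varepsilon}^{\delta}$, so $\tau_{m}=\tau$ for every $m$ and all three estimates hold uniformly on the whole interval.
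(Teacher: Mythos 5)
Your proposal is correct and follows essentially the same route as the paper: the same energy argument (multiply (\ref{NS35}) by $g_{\varepsilon k}^{\delta}$ and sum), the same cancellation of $b(u,u,u)+\frac{1}{2}\int_{\Omega}|u|^{2}div(u)\,dx$ via (\ref{formuleb}), the vanishing of $b(G_{0}\zeta,u,u)$ from $div(G_{0})=0$ and the boundary conditions, dropping the non-negative friction term, absorbing the cross-terms into the Korn coercivity, and closing with Gr\"onwall to force $\tau_{m}=\tau$. The only (immaterial) difference is your treatment of $b(u,G_{0}\zeta,u)$ by the H\"older split $L^{6}\times L^{12/5}\times L^{12/5}$ with interpolation, where the paper uses the simpler $L^{4}\times L^{4}\times L^{2}$ split with $\nabla G_{0}\in {\bf H}^{1}(\Omega)\hookrightarrow {\bf L}^{4}(\Omega)$; both yield a term absorbed as $\frac{\alpha}{4}\|u\|_{{\bf H}^{1}(\Omega)}^{2}+C\|u\|_{{\bf L}^{2}(\Omega)}^{2}$.
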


\begin{proof}
By multiplying  equation (\ref{NS35}) by $g_{\varepsilon k}^{\delta}(t)$ and adding from $k=1$ to $m$, we obtain
\begin{eqnarray*}
\begin{array}{ll}
\displaystyle{ \left(\frac{\partial \widetilde{v}_{\varepsilon m}^{\delta}}{\partial t},\widetilde{v}_{\varepsilon m}^{\delta}\right)+ b(\widetilde{v}_{\varepsilon m}^{\delta},\widetilde{v}_{\varepsilon m}^{\delta},\widetilde{v}_{\varepsilon m}^{\delta})
+\frac{1}{2}\int_{\Omega}
\widetilde{v}_{\varepsilon m}^{\delta}div(\widetilde{v}_{\varepsilon m}^{\delta})\widetilde{v}_{\varepsilon m}^{\delta} \, dx
+\frac{1}{\delta}\left(div (\widetilde{v}_{\varepsilon m}^{\delta}),div ( \widetilde{v}_{\varepsilon m}^{\delta}) \right)
} \\
\displaystyle{
+ \int_{\Omega} 2 \mu(T) D(\widetilde{v}_{\varepsilon m}^{\delta}) : D(\widetilde{v}_{\varepsilon m}^{\delta}) \, dx
+ \int_{\Gamma_0} \ell \frac{|\widetilde{v}_{\varepsilon m}^{\delta}|^{2}}{\sqrt{\varepsilon^{2}+|\widetilde{v}_{\varepsilon m}^{\delta}|^{2}}} \, dx'
= ( f ,\widetilde{v}_{\varepsilon m}^{\delta})}\\
\displaystyle
- \int_{\Omega} 2 \mu(T) D (G_{0} \zeta) : D ( \widetilde{v}_{\varepsilon m}^{\delta} ) \, dx
 - \left( G_{0} \frac{\partial \zeta}{\partial t}, \widetilde{v}_{\varepsilon m}^{\delta} \right)
  -   b(G_{0}\zeta  , \widetilde{v}_{\varepsilon m}^{\delta} + G_{0} \zeta , \widetilde{v}_{\varepsilon m}^{\delta}) \\
  \displaystyle - b ( \widetilde{v}_{\varepsilon m}^{\delta}, G_0 \zeta, \widetilde{v}_{\varepsilon m}^{\delta})
\quad \mbox{\rm a.e. in } (0, \tau_m) .
\end{array}
\end{eqnarray*}
With (\ref{formuleb}) we get
\begin{eqnarray} \label{tech}
 b(\widetilde{v}_{\varepsilon m}^{\delta},\widetilde{v}_{\varepsilon m}^{\delta},\widetilde{v}_{\varepsilon m}^{\delta})
+\frac{1}{2}\int_{\Omega}
\widetilde{v}_{\varepsilon m}^{\delta}div(\widetilde{v}_{\varepsilon m}^{\delta}) \widetilde{v}_{\varepsilon m}^{\delta} \, dx =0
\end{eqnarray}
and since  $div (G_{0}) =0$ in $\Omega$,  we have also $b(G_{0},\widetilde{v}_{\varepsilon m}^{\delta},\widetilde{v}_{\varepsilon m}^{\delta})=0$. Furthermore
since $ \ell \in L^{2}(0,\tau; {\bf L}^{2}_{+}(\Gamma_0))$, we obtain
\begin{eqnarray*}
\begin{array}{ll}
\displaystyle{ \left(\frac{\partial \widetilde{v}_{\varepsilon m}^{\delta}}{\partial t},\widetilde{v}_{\varepsilon m}^{\delta}\right)
+\frac{1}{\delta}\left(div (\widetilde{v}_{\varepsilon m}^{\delta}),div ( \widetilde{v}_{\varepsilon m}^{\delta}) \right)
+ \int_{\Omega} 2 \mu(T) D(\widetilde{v}_{\varepsilon m}^{\delta}) : D(\widetilde{v}_{\varepsilon m}^{\delta}) \, dx } \\
\displaystyle{
\le  ( f ,\widetilde{v}_{\varepsilon m}^{\delta})
- \int_{\Omega} 2 \mu(T) D (G_{0} \zeta) : D ( \widetilde{v}_{\varepsilon m}^{\delta} ) \, dx
- \left( G_{0} \frac{\partial \zeta}{\partial t}, \widetilde{v}_{\varepsilon m}^{\delta} \right) }\\
\displaystyle  -   b(G_{0}\zeta  ,  G_{0} \zeta , \widetilde{v}_{\varepsilon m}^{\delta}) - b ( \widetilde{v}_{\varepsilon m}^{\delta}, G_0 \zeta, \widetilde{v}_{\varepsilon m}^{\delta})
\quad \mbox{\rm a.e. in } (0, \tau_m) .
\end{array}
\end{eqnarray*}
Let us estimate now the terms in the right-hand side of the previous inequality.
We denote hereinafter by $K$ the constant of the continuous injection of ${\bf H}^{1}(\Omega)$ into ${\bf L}^{4}(\Omega)$.
By using  Cauchy-Schwarz's and Young's inequalities, we obtain
\begin{eqnarray*}
\left|(f,\widetilde{v}_{\varepsilon m}^{\delta})\right|
&\leq& \|f\|_{{\bf L}^{2}(\Omega)}\|\widetilde{v}_{\varepsilon m}^{\delta}\|_{{\bf L}^{2}(\Omega)}\\&\leq& \frac{1}{2}\|f\|_{{\bf L}^{2}(\Omega)}^{2}+\frac{1}{2}\|\widetilde{v}_{\varepsilon m}^{\delta}\|_{{\bf L}^{2}(\Omega)}^{2},
\end{eqnarray*}
\begin{eqnarray*}
\left| \int_{\Omega} 2 \mu(T) D(G_0 \zeta): D(\widetilde{v}_{\varepsilon m}^{\delta}) \, dx \right|
&\leq& \mu_{*}|\zeta|\|\widetilde{v}_{\varepsilon m}^{\delta}\|_{{\bf H}^{1}(\Omega)}\|G_{0}\|_{{\bf H}^{1}(\Omega)}\\&\leq& \frac{\alpha}{4}\|\widetilde{v}_{\varepsilon m}^{\delta}\|_{{\bf H}^{1}(\Omega)}^{2}+\frac{\mu_{*}^{2}}{\alpha}|\zeta|^{2} \|G_{0}\|_{{\bf H}^{1}(\Omega)}^{2},
\end{eqnarray*}
\begin{eqnarray*}
\left|\left(G_{0}\frac{\partial \zeta}{\partial t} ,\widetilde{v}_{\varepsilon m}^{\delta}\right)\right|&\leq& \left|\frac{\partial \zeta}{\partial t}\right|\|G_{0}\|_{{\bf L}^{2}(\Omega)}\|\widetilde{v}_{\varepsilon m}^{\delta}\|_{{\bf L}^{2}(\Omega)}\\&\leq& \frac{1}{2}\left|\frac{\partial \zeta}{\partial t}\right|^{2}\|G_{0}\|_{{\bf L}^{2}(\Omega)}^{2}+\frac{1}{2}\|\widetilde{v}_{\varepsilon m}^{\delta}\|_{{\bf L}^{2}(\Omega)}^{2},
\end{eqnarray*}
\begin{eqnarray*}
\left|b(G_{0}\zeta ,G_{0} \zeta ,\widetilde{v}_{\varepsilon m}^{\delta})\right|&\leq&    |\zeta|^{2} \|G_{0}\|_{{\bf L}^{4}(\Omega)}\|\nabla G_{0}\|_{{\bf L}^{4}(\Omega)}\|\widetilde{v}_{\varepsilon m}^{\delta}\|_{{\bf L}^{2}(\Omega)}\\&\leq& \frac{1}{2}\|\widetilde{v}_{\varepsilon m}^{\delta} \|_{{\bf L}^{2}(\Omega)}^{2}+\frac{K^{4}}{2}|\zeta|^{4}\|G_{0}\|^{2}_{{\bf H}^{1}(\Omega)}\|\nabla G_{0}\|^{2}_{{\bf H}^{1}(\Omega)},
\end{eqnarray*}
and
\begin{eqnarray*}
\left| b(\widetilde{v}_{\varepsilon m}^{\delta},G_{0} \zeta,\widetilde{v}_{\varepsilon m}^{\delta})\right|&\leq& |\zeta|\|\widetilde{v}_{\varepsilon m}^{\delta}\|_{{\bf L}^{4}(\Omega)}\|\nabla G_{0}\|_{{\bf L}^{4}(\Omega)}\|\widetilde{v}_{\varepsilon m}^{\delta}\|_{{\bf L}^{2}(\Omega)}\\&\leq& \frac{\alpha}{4}\|\widetilde{v}_{\varepsilon m}^{\delta}\|_{{\bf H}^{1}(\Omega)}^{2}+\frac{K^{4}}{\alpha}|\zeta|^{2}\|\nabla G_{0}\|_{{\bf H}^{1}(\Omega)}^{2}\|\widetilde{v}_{\varepsilon m}^{\delta}\|_{{\bf L}^{2}(\Omega)}^{2}.
\end{eqnarray*}
With (\ref{coercif}) and an integration
 from  $0$ to $s$, with $0<s<\tau_m$, we get
\begin{eqnarray*}
\begin{array}{ll}
\displaystyle{\frac{1}{2}\|\widetilde{v}_{\varepsilon m}^{\delta}(s)\|^{2}_{\bf{L}^{2}(\Omega)}
+\frac{1}{\delta}\int_{0}^{s}\|div(\widetilde{v}_{\varepsilon m}^{\delta})\|^{2}_{L^{2}(\Omega)}\,dt
+ \frac{\alpha}{2}\int_{0}^{s}\|\widetilde{v}_{\varepsilon m}^{\delta}\|^{2}_{{\bf H}^{1}(\Omega)}\,dt
\leq\frac{1   }{2}\|\widetilde{v}_{\varepsilon m}^{\delta}(0)\|^{2}_{{\bf L}^{2}(\Omega)}}\\
\displaystyle{+\frac{1}{2}\int_{0}^{s}\|f\|_{{\bf L}^{2}(\Omega)}^{2}\,dt +\frac{\mu_{*}^{2}}{\alpha}\|G_{0}\|_{{\bf H}^{1}(\Omega)}^{2}\int_{0}^{s}|\zeta|^{2}\,dt+\frac{1}{2}\|G_{0}\|_{{\bf L}^{2}(\Omega)}^{2}\int_{0}^{s}\left|\frac{\partial \zeta}{\partial t}\right|^{2}\,dt}\\
\displaystyle{+\frac{3}{2}\int_{0}^{s}\|\widetilde{v}_{\varepsilon m}^{\delta}\|_{{\bf L}^{2}(\Omega)}^{2}\,dt+\frac{K^{4}}{\alpha}\|\nabla G_{0}\|_{{\bf H}^{1}(\Omega)}^{2}\int_{0}^{s}|\zeta|^{2}\|\widetilde{v}_{\varepsilon m}^{\delta}\|_{{\bf L}^{2}(\Omega)}^{2}\,dt}\\\displaystyle{+\frac{K^{4}}{2}\|G_{0}\|^{2}_{{\bf H}^{1}(\Omega)}\| \nabla G_{0}\|^{2}_{{\bf H}^{1}(\Omega)}\int_{0}^{s}|\zeta|^{4}\,dt.}
\end{array}
\end{eqnarray*}
Reminding that $\widetilde{v}_{\varepsilon m 0}^{\delta}$ is defined as the orthogonal projection of $\widetilde{v}_{\varepsilon 0}^{\delta}$ in ${\bf L}^{2}(\Omega)$ on  ${\rm Span} \bigl\{w_{1}\ldots w_{m}\bigr\}$ and that the sequence $(\widetilde{v}_{\varepsilon  0}^{\delta})_{\varepsilon>0, \delta>0}$ is bounded in ${\bf L}^2(\Omega)$, we infer that there exists a constant $C_0$, independent of $\delta$ and $\varepsilon$ such that
\begin{eqnarray*}
\|\widetilde{v}_{\varepsilon m }^{\delta}(0)\|_{L^{2}(\Omega)} = \|\widetilde{v}_{\varepsilon m 0}^{\delta}\|_{L^{2}(\Omega)}  \le \|\widetilde{v}_{\varepsilon 0 }^{\delta} \|_{L^{2}(\Omega)}  \le C_0 \quad \forall m \ge 1, \ \forall \delta>0, \  \forall \varepsilon >0.
\end{eqnarray*}
It follows that
\begin{eqnarray}
\label{NS36-2}
\begin{array}{ll}
\displaystyle \frac{1}{2}\|\widetilde{v}_{\varepsilon m}^{\delta}(s)\|^{2}_{{\bf L}^{2}(\Omega)}
+\frac{1}{\delta}\int_{0}^{s}\|div(\widetilde{v}_{\varepsilon m}^{\delta})\|^{2}_{L^{2}(\Omega)}\,dt
+ \frac{\alpha}{2}\int_{0}^{s}\|\widetilde{v}_{\varepsilon m}^{\delta}\|^{2}_{{\bf H}^{1}(\Omega)}\,dt
\leq
 C_{1}\\
 \displaystyle +C_{2}\int_{0}^{s}\|\widetilde{v}_{\varepsilon m}^{\delta}\|_{{\bf L}^{2}(\Omega)}^{2}\,dt,
 \end{array}
\end{eqnarray}
where $C_{1}$ and $C_{2}$ are two constants independent of  $m$,  $\delta$ and $\varepsilon$, namely
\begin{eqnarray*}
\begin{array}{ll}
\displaystyle C_{1}=\frac{1}{2}C_0^2 +\frac{1}{2}\int_{0}^{\tau}\|f\|_{{\bf L}^{2}(\Omega)}^{2}\,dt +
\frac{\mu_{*}^{2}}{\alpha}\|G_{0}\|_{{\bf H}^{1}(\Omega)}^{2}\int_{0}^{\tau}|\zeta|^{2}\,dt
+ \frac{1}{2}\|G_{0}\|_{{\bf L}^{2}(\Omega)}^{2}\int_{0}^{\tau}\left|\frac{\partial \zeta}{\partial t}\right|^{2}\,dt \\
\displaystyle
  +\frac{   K^{4}}{2}\|G_{0}\|^{2}_{{\bf H}^{1}(\Omega)}\| \nabla G_{0}\|^{2}_{{\bf H}^{1}(\Omega)}\int_{0}^{\tau}|\zeta|^{4}\,dt
\end{array}
\end{eqnarray*}
and
\begin{eqnarray*}
\displaystyle C_{2}=\frac{3}{2}+\frac{K^{4}}{\alpha}\|\nabla G_{0}\|_{{\bf H}^{1}(\Omega)}^{2} \|\zeta\|^{2}_{L^{\infty}(0,\tau)}.
\end{eqnarray*}
With Gr\"onwall's lemma, we obtain
\begin{eqnarray}\label{estimL2}
\|\widetilde{v}_{\varepsilon m}^{\delta}(s)\|^{2}_{{\bf L}^{2}(\Omega)} \leq 2C_{1}\exp\left(2s C_{2}\right)  \le 2C_{1}\exp\left(2 \tau C_{2}\right) \quad \forall s \in [0, \tau_m).
\end{eqnarray}
With (\ref{NS34}) and (\ref{edo}) we infer that the functions $g_{\varepsilon j}^{\delta}$, $1 \le j \le m$,   admit a limit at $\tau_m$ and, by definition of the maximal solution, we may conclude  that $\tau_m=\tau$.
Now, (\ref{NS36}) follows from (\ref{estimL2}). By inserting (\ref{estimL2}) in (\ref{NS36-2}) with $s=\tau$, we obtain (\ref{NS37}) and (\ref{NS37D}).
$\Box$
\end{proof}

In the following lemma, we establish an estimate of the time derivative for the approximate velocity.

\begin{lemma} \label{lem5} Under  the same assumptions as in  Lemma \ref{lem4}, we have
\begin{eqnarray}\label{eq47}
\left\|\frac{\partial \widetilde{v}_{\varepsilon m}^{\delta}}{\partial t} \right\|_{L^{\frac{4}{3}}(0,\tau;{\cal V}_{0}')}\leq C_{\delta}
\end{eqnarray}
where $C_{\delta}$ is a constant independent of $m$ and $\varepsilon$.
\end{lemma}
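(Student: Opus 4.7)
The strategy is the classical one for estimating $\partial_t$ in Galerkin approximations of Navier--Stokes systems: use the test function $P_m \varphi$ in the Galerkin equation, where $P_m$ denotes the orthogonal projector in ${\bf L}^2(\Omega)$ onto $\mathrm{Span}\{w_1,\dots,w_m\}$, and then bound every term in the right-hand side of (\ref{NS35}) in the norm of $({\cal V}_0)'$, uniformly in $m$ and $\varepsilon$ (but \emph{not} in $\delta$).

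Fix $\varphi \in {\cal V}_0$ with $\|\varphi\|_{{\bf H}^1(\Omega)} \le 1$. Since $\widetilde{v}_{\varepsilon m}^\delta$ and hence $\partial_t \widetilde{v}_{\varepsilon m}^\delta$ belong to $\mathrm{Span}\{w_1,\dots,w_m\}$, one has
$\bigl(\partial_t \widetilde{v}_{\varepsilon m}^\delta, \varphi \bigr) = \bigl(\partial_t \widetilde{v}_{\varepsilon m}^\delta, P_m \varphi \bigr)$, and using (\ref{NS35}) together with linearity, this quantity equals the sum of the seven other terms of (\ref{NS35}) tested against $P_m \varphi$. The continuity of $P_m$ in ${\bf H}^1(\Omega)$ (which holds because the basis $(w_i)$ is orthogonal in ${\bf H}^1(\Omega)$) gives $\|P_m \varphi\|_{{\bf H}^1(\Omega)} \le \|\varphi\|_{{\bf H}^1(\Omega)} \le 1$, and the trace theorem controls $\|P_m \varphi\|_{{\bf L}^2(\Gamma_0)}$ similarly.

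Next I would bound each of the seven terms. Six of them lead to $L^2(0,\tau)$ estimates, which embed into $L^{4/3}(0,\tau)$. Specifically: the linear viscous terms (both in $\widetilde{v}_{\varepsilon m}^\delta$ and in $G_0 \zeta$) are controlled by $\mu_*\|\widetilde{v}_{\varepsilon m}^\delta\|_{{\bf H}^1}$ resp.\ $\mu_* |\zeta| \|G_0\|_{{\bf H}^1}$, both $L^2$ in time by (\ref{NS37}) and the regularity of $\zeta$; the inertial and time-derivative $G_0$ terms by $\|f\|_{{\bf L}^2}$ and $|\partial_t \zeta|\,\|G_0\|_{{\bf L}^2}$; the friction term by $\|\ell\|_{{\bf L}^2(\Gamma_0)}$ (since $|w/\sqrt{\varepsilon^2+|w|^2}| \le 1$, this gives a bound \emph{independent} of $\varepsilon$); the divergence-correction term $\frac{1}{2}\int_\Omega \widetilde{v}_{\varepsilon m}^\delta\, \mathrm{div}(\widetilde{v}_{\varepsilon m}^\delta) P_m\varphi\,dx$ by H\"older ($L^4$-$L^2$-$L^4$) and the Sobolev embedding ${\bf H}^1 \hookrightarrow {\bf L}^4$ (valid in $d\le 3$), bounded in $L^2(0,\tau)$ by $C\|\widetilde{v}_{\varepsilon m}^\delta\|_{L^2(0,\tau;{\bf H}^1)} \|\mathrm{div}(\widetilde{v}_{\varepsilon m}^\delta)\|_{L^\infty(0,\tau;L^2)}$, but safely by $C$ using (\ref{NS37}) and Cauchy--Schwarz; and finally the two $b(\cdot,\cdot,\cdot)$ terms involving $G_0\zeta$, handled by H\"older with $G_0 \in {\bf H}^2(\Omega) \hookrightarrow {\bf W}^{1,4}(\Omega)$.

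The two delicate points are the convective term $b(\widetilde{v}_{\varepsilon m}^\delta,\widetilde{v}_{\varepsilon m}^\delta,P_m\varphi)$ and the penalty term $\frac{1}{\delta}\bigl(\mathrm{div}(\widetilde{v}_{\varepsilon m}^\delta), \mathrm{div}(P_m\varphi)\bigr)$. For the convective term, the Gagliardo--Nirenberg (Ladyzhenskaya) inequality in dimension $d\le 3$ gives
\begin{eqnarray*}
|b(\widetilde{v}_{\varepsilon m}^\delta,\widetilde{v}_{\varepsilon m}^\delta,P_m\varphi)|
\le \|\widetilde{v}_{\varepsilon m}^\delta\|_{{\bf L}^4}^2 \|P_m\varphi\|_{{\bf H}^1}
\le C \|\widetilde{v}_{\varepsilon m}^\delta\|_{{\bf L}^2}^{1/2} \|\widetilde{v}_{\varepsilon m}^\delta\|_{{\bf H}^1}^{3/2},
\end{eqnarray*}
whence raising to the power $4/3$ and integrating in time yields
\begin{eqnarray*}
\int_0^\tau |b(\widetilde{v}_{\varepsilon m}^\delta,\widetilde{v}_{\varepsilon m}^\delta,P_m\varphi)|^{4/3}\,dt
\le C \|\widetilde{v}_{\varepsilon m}^\delta\|_{L^\infty(0,\tau;{\bf L}^2)}^{2/3} \|\widetilde{v}_{\varepsilon m}^\delta\|_{L^2(0,\tau;{\bf H}^1)}^2 \le C
\end{eqnarray*}
by (\ref{NS36})--(\ref{NS37}), uniformly in $m$, $\delta$, $\varepsilon$; this is precisely the step that forces the $L^{4/3}$ rather than $L^2$ time-integrability. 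For the penalty term, using (\ref{NS37D}),
$\int_0^\tau (\tfrac{1}{\delta}\|\mathrm{div}(\widetilde{v}_{\varepsilon m}^\delta)\|_{L^2})^2\,dt \le \tfrac{C}{\delta}$, so the constant $C_\delta$ inherits a $\delta$-dependence of order $1/\sqrt{\delta}$ via this term. The main obstacle is essentially understanding that only the 3D nonlinearity degrades the time integrability from $L^2$ to $L^{4/3}$; all other terms (including the friction, thanks to the $\varepsilon$-regularization being bounded by $1$) are under control and give $\varepsilon$-independent contributions.
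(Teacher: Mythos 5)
Your overall strategy is exactly the paper's: test (\ref{NS35}) with the projection of an arbitrary $\varphi\in{\cal V}_0$, bound every term of the right-hand side in $({\cal V}_0)'$, get an $\varepsilon$-independent bound for the friction term from $|\Psi'_{\varepsilon}|\le\ell$, and let only the penalty term feed the $\delta$-dependence into $C_{\delta}$. Your projection step is also sound and coincides with the paper's: since the basis is orthogonal for the ${\bf H}^1$ inner product and orthonormal for the ${\bf L}^2$ one, the ${\bf L}^2$-projection $P_m\varphi$ equals the paper's ${\bf H}^1$-orthogonal projection $\varphi_m$, so $\|P_m\varphi\|_{{\bf H}^1(\Omega)}\le\|\varphi\|_{{\bf H}^1(\Omega)}$ and $(\partial_t\widetilde v^{\delta}_{\varepsilon m},P_m\varphi)=(\partial_t\widetilde v^{\delta}_{\varepsilon m},\varphi)$ hold as you claim. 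However, your estimates of the two nonlinear terms contain a genuine gap. The inequality $|b(u,u,P_m\varphi)|\le\|u\|_{{\bf L}^4}^2\|P_m\varphi\|_{{\bf H}^1}$ is false as stated: in $b(u,u,\varphi)=\int_\Omega u_i\partial_i u_j\varphi_j\,dx$ the derivative sits on $u$, so H\"older gives $\|u\|_{{\bf L}^4}\|\nabla u\|_{{\bf L}^2}\|\varphi\|_{{\bf L}^4}$, which carries a full extra power of $\|u\|_{{\bf H}^1}$. Your bound would follow from $b(u,u,\varphi)=-b(u,\varphi,u)$, but that identity requires $div(u)=0$, and $\widetilde v^{\delta}_{\varepsilon m}$ is precisely \emph{not} divergence free in the penalized problem; using (\ref{formuleb}) instead leaves the remainder $\int_\Omega div(u)\,u\cdot\varphi\,dx$, which is of the same nature as the correction term discussed next.

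For the correction term $\frac12\int_\Omega \widetilde v^{\delta}_{\varepsilon m}\,div(\widetilde v^{\delta}_{\varepsilon m})\,P_m\varphi\,dx$ your argument also fails: the bound you invoke uses $\|div(\widetilde v^{\delta}_{\varepsilon m})\|_{L^{\infty}(0,\tau;L^2(\Omega))}$, which none of (\ref{NS36})--(\ref{NS37D}) provides, and the honest version of your $L^4$--$L^2$--$L^4$ H\"older gives, pointwise in time, $C\|u\|_{{\bf L}^2}^{1/4}\|u\|_{{\bf H}^1}^{7/4}\|\varphi\|_{{\bf H}^1}$; raising to the power $4/3$ produces $\|u\|_{{\bf H}^1}^{7/3}$, which is not integrable under (\ref{NS37}). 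You only obtain an $L^{8/7}$-in-time bound, and this is strictly weaker than the required $L^{4/3}$ bound (on a finite interval $L^{4/3}(0,\tau)\subset L^{8/7}(0,\tau)$, not conversely). The repair is the paper's choice of exponents: H\"older with $3$--$2$--$6$ combined with the interpolation $\|u\|_{L^3(\Omega)}\le\|u\|_{L^2(\Omega)}^{1/2}\|u\|_{L^6(\Omega)}^{1/2}$, the embedding ${\bf H}^1(\Omega)\hookrightarrow{\bf L}^6(\Omega)$ and $\|div(u)\|_{L^2(\Omega)}\le\sqrt3\,\|u\|_{{\bf H}^1(\Omega)}$, which bound \emph{both} nonlinear terms by $C\|u\|_{{\bf L}^2}^{1/2}\|u\|_{{\bf H}^1}^{3/2}\|\varphi\|_{{\bf H}^1}$. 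The point is quantitative: $L^{4/3}$-integrability in time tolerates a total ${\bf H}^1$-exponent of at most $3/2$, which the $1/2$--$1/2$ split of the ${\bf L}^3$ interpolation achieves, while the $1/4$--$3/4$ split of the ${\bf L}^4$ interpolation exceeds it ($3/4+1=7/4$) whenever a factor $\nabla u$ or $div(u)$ is also present. Your final displayed estimate for the convective term (and the resulting $\|u\|_{L^{\infty}L^2}^{2/3}\|u\|_{L^2{\bf H}^1}^2$ bound) is the correct target, but the route you propose to it does not close.
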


\begin{proof}
Let $\varphi \in {\cal V}_{0}$. For all $m \ge 1$, we define $\varphi_{m}$ as the othogonal projection with respect to the  inner product of
${\bf H}^{1}(\Omega)$
of $\varphi$ on   ${\rm Span} \bigl\{w_{1},\ldots,w_{m} \bigr\}$.
With (\ref{NS35}) we get
\begin{eqnarray*}
\begin{array}{ll}
\displaystyle{   \left(\frac{ \partial \widetilde{v}_{\varepsilon m}^{\delta}}{\partial t} ,\varphi_{m}\right)
= - b(\widetilde{v}_{\varepsilon m}^{\delta},\widetilde{v}_{\varepsilon m}^{\delta},\varphi_{m})-
\frac{1}{2}\int_{\Omega}
\widetilde{v}_{\varepsilon m}^{\delta}div (\widetilde{v}_{\varepsilon m}^{\delta}) \varphi_{m} \, dx }
-\frac{1}{\delta}\bigl(div(\widetilde{v}_{\varepsilon m}^{\delta}),div (\varphi_{m}) \bigr)
\\
\displaystyle{- \int_{\Omega} 2 \mu(T) D(\widetilde{v}_{\varepsilon m}^{\delta}) : D(\varphi_m) \, dx
- \int_{\Gamma_0} \ell \frac{\widetilde{v}_{\varepsilon m}^{\delta} \cdot \varphi_m}{\sqrt{\varepsilon^2 + |\widetilde{v}_{\varepsilon m}^{\delta}|^2}} \, dx'
+
(f, \varphi_m)}\\\displaystyle{
- \int_{\Omega}  2 \mu(T) D(G_0 \zeta): D (\varphi_m) \, dx
  - \left( G_{0} \frac{\partial \zeta}{\partial t}, \varphi_m \right) -   b(G_{0}\zeta  , \widetilde{v}_{\varepsilon m}^{\delta} + G_{0} \zeta , \varphi_m) }\\\displaystyle{- b ( \widetilde{v}_{\varepsilon m}^{\delta}, G_0 \zeta, \varphi_m)
\quad \mbox{\rm a.e. in } (0, \tau).}
\end{array}
\end{eqnarray*}
We estimate all the terms in the right hand side of the previous equality, we obtain
\begin{eqnarray*}
\begin{array}{ll}
\displaystyle{\left|\left( \frac{\partial \widetilde{v}_{\varepsilon m}^{\delta}}{\partial t},\varphi_{m}\right)\right|
\leq
 \left(\|\widetilde{v}_{\varepsilon m}^{\delta}\|_{{\bf L}^{3}(\Omega)}\|\nabla\widetilde{v}_{\varepsilon m}^{\delta}\|_{{\bf L}^{2}(\Omega)}
+\frac{1}{2}\|\widetilde{v}_{\varepsilon m}^{\delta}\|_{{\bf L}^{3}(\Omega)} \bigl\|div (\widetilde{v}_{\varepsilon m}^{\delta}) \bigr \|_{ L^{2}(\Omega)}\right)\|\varphi_{m}\|_{{\bf L}^{6}(\Omega)}
}
\\
\displaystyle{
+\frac{1}{\delta}\bigl\|div (\widetilde{v}_{\varepsilon m}^{\delta})\bigr\|_{L^{2}(\Omega)}\bigl\|div (\varphi_{m}) \bigr\|_{L^{2}(\Omega)}
+\mu_{*}\| \widetilde{v}_{\varepsilon m}^{\delta}\|_{{\bf H}^{1}(\Omega)}\| \varphi_{m}\|_{{\bf H}^{1}(\Omega)}
}
\\
\displaystyle{+\|\ell\|_{{\bf L}^{2}(\Gamma_0)}\|\varphi_{m}\|_{{\bf L}^{2}(\Gamma_0)}
+ \|f\|_{{\bf L}^{2}(\Omega)}\|\varphi_{m}\|_{{\bf L}^{2}(\Omega)}+\mu_{*}|\zeta|\|G_{0}\|_{{\bf H}^{1}(\Omega)}\|\varphi_{m}\|_{{\bf H}^{1}(\Omega)} } \\
\displaystyle
+   \left|\frac{\partial\zeta}{\partial t}\right|\|G_{0}\|_{{\bf L}^{2}(\Omega)}\|\varphi_{m}\|_{{\bf L}^{2}(\Omega)}
+   |\zeta|\|G_{0}\|_{{\bf L}^{4}(\Omega)}\|\nabla\widetilde{v}_{\varepsilon m}^{\delta}+\nabla G_{0}\|_{{\bf L}^{2}(\Omega)}\|\varphi_{m}\|_{{\bf L}^{4}(\Omega)}
\\
\displaystyle +|\zeta|\|\widetilde{v}_{\varepsilon m}^{\delta}\|_{{\bf L}^{4}(\Omega)}\|\nabla G_{0}\|_{{\bf L}^{2}(\Omega)}\|\varphi_{m}\|_{{\bf L}^{4}(\Omega)} \quad \mbox{\rm a.e. in } (0, \tau).
\end{array}
\end{eqnarray*}
By using the classical inequality
$$\|u\|_{L^{3}(\Omega)}\leq \|u\|_{L^{2}(\Omega)}^{\frac{1}{2}}\|u\|_{L^{6}(\Omega)}^{\frac{1}{2}} \quad \forall u\in L^{6}(\Omega) \cap L^2 (\Omega) $$
and the injection of  ${\bf H}^{1}(\Omega)$ in ${\bf L}^{6}(\Omega)$,  we infer that there exists a constant $c$, independent of $m$,  $\delta$ and $\varepsilon$,  such that
\begin{eqnarray*}
\|\widetilde{v}_{\varepsilon m}^{\delta}\|_{{\bf L}^{3}(\Omega)}\|\nabla\,\widetilde{v}_{\varepsilon m}^{\delta}\|_{{\bf L}^{2}(\Omega)}\|\varphi_{m}\|_{{\bf L}^{6}(\Omega)}
\leq c \|\widetilde{v}_{\varepsilon m}^{\delta}\|_{{\bf L}^{2}(\Omega)}^{\frac{1}{2}}
\|\widetilde{v}_{\varepsilon m}^{\delta}\|_{{\bf H}^{1}(\Omega)}^{\frac{3}{2}}
\|\varphi_{m}\|_{{\bf H}^{1}(\Omega)}.
\end{eqnarray*}
As $(w_{j})_{j\geq 1}$ is an orthogonal family of  ${\bf L}^{2}(\Omega)$ and $\varphi_{m}$ is the orthogonal
projection with respect to the  inner product of
${\bf H}^{1}(\Omega)$
of $\varphi$ on   ${\rm Span} \bigl\{w_{1},\ldots,w_{m} \bigr\}$,
 we have $\|\varphi_{m}\|_{{\bf H}^{1}(\Omega)}\leq \|\varphi\|_{{\bf H}^{1}(\Omega)}$ and
\begin{eqnarray*}
\left( \frac{ \widetilde{v}_{\varepsilon m}^{\delta}}{\partial t} ,\varphi_{m}\right)=\left( \frac{\partial \widetilde{v}_{\varepsilon m}^{\delta}}{\partial t} ,\varphi_{k} \right) \quad \forall k \ge m.
\end{eqnarray*}
Since $(w_{j})_{j\geq 1}$ is an Hilbertian basis of ${\cal V}_0$, the sequence $(\varphi_k)_{k \ge 1}$ converges strongly to $\varphi$ in ${\bf H}^1(\Omega)$ and we get
\begin{eqnarray*}
\left( \frac{ \widetilde{v}_{\varepsilon m}^{\delta}}{\partial t} ,\varphi_{m}\right)=\left( \frac{\partial \widetilde{v}_{\varepsilon m}^{\delta}}{\partial t} ,\varphi \right) .
\end{eqnarray*}
Then, we obtain
\begin{eqnarray*}
\begin{array}{ll}
\displaystyle{
\left|\left( \frac{\partial \widetilde{v}_{\varepsilon m}^{\delta}}{\partial t},\varphi\right)\right|
\leq
  \left( \frac{ \sqrt{3} }{2}+1 \right) c \|\widetilde{v}_{\varepsilon m}^{\delta}\|_{{\bf L}^{2}(\Omega)}^{\frac{1}{2}}
\| \widetilde{v}_{\varepsilon m}^{\delta}\|_{{\bf H}^{1}(\Omega)}^{\frac{3}{2}}
\|\varphi\|_{{\bf H}^{1}(\Omega)}
}

 \\
\displaystyle{+\frac{\sqrt{3}}{\delta} \bigl\|div (\widetilde{v}_{\varepsilon m}^{\delta}) \bigr\|_{L^{2}(\Omega)}
\|\varphi\|_{{\bf H}^{1}(\Omega)}
+ \mu_{*}\|\widetilde{v}_{\varepsilon m}^{\delta}\|_{{\bf H}^{1}(\Omega)}\|\varphi\|_{{\bf H}^{1}(\Omega)}
+ \tilde c \| \ell\|_{{\bf L}^{2}(\Gamma_0)}
\|\varphi\|_{{\bf H}^{1}(\Omega)}
} \\
\displaystyle{+\left(\|f\|_{{\bf L}^{2}(\Omega)}+\mu_{*}|\zeta|\|G_{0}\|_{{\bf H}^{1}(\Omega)}+   \left|\frac{\partial\zeta}{\partial t}\right|\|G_{0}\|_{{\bf L}^{2}(\Omega)}\right)\|\varphi\|_{{\bf H}^{1}(\Omega)}} \\
\displaystyle{+\left(K^{2}|\zeta|\|G_{0}\|_{{\bf H}^{1}(\Omega)}\|\widetilde{v}_{\varepsilon m}^{\delta}+ G_{0}\zeta\|_{{\bf H}^{1}(\Omega)}+K^{2}|\zeta|\|\widetilde{v}_{\varepsilon m}^{\delta}\|_{{\bf H}^{1}(\Omega)}\|G_{0}\|_{{\bf H}^{1}(\Omega)}\right)\|\varphi\|_{{\bf H}^{1}(\Omega)} \ \mbox{\rm a.e. in $(0, \tau)$,} }
\end{array}
\end{eqnarray*}
where $\tilde c$ is the norm of the trace operator $\gamma_0: {\bf H}^{1}(\Omega) \to {\bf L}^2 (\Gamma_0)$.
Hence
\begin{eqnarray*}
\begin{array}{ll}
\displaystyle{
\left\| \frac{\partial \widetilde{v}_{\varepsilon m}^{\delta}}{\partial t} \right\|_{{\cal V}_{0}'}
 \leq
 \left( \frac{\sqrt{3}}{2} +1 \right) c\|\widetilde{v}_{\varepsilon m}^{\delta}\|_{{\bf L}^{2}(\Omega)}^{\frac{1}{2}}\|\widetilde{v}_{\varepsilon m}^{\delta}\|_{{\bf H}^{1}(\Omega)}^{\frac{3}{2}}
 +\frac{\sqrt{3} }{\delta}\bigl\|div (\widetilde{v}_{\varepsilon m}^{\delta}) \bigr\|_{L^{2}(\Omega)}
+ \mu_{*}\|\widetilde{v}_{\varepsilon m}^{\delta}\|_{{\bf H}^{1}(\Omega)} }
\\
\displaystyle{
+ \tilde c \| \ell \|_{{\bf L}^{2}(\Gamma_0)}+\|f\|_{{\bf L}^{2}(\Omega)} +\mu_{*}|\zeta|\|G_{0}\|_{{\bf H}^{1}(\Omega)}+   \left|\frac{\partial\zeta}{\partial t}\right|\|G_{0}\|_{{\bf L}^{2}(\Omega)}}\\
\displaystyle{+K^{2}|\zeta|\|G_{0}\|_{{\bf H}^{1}(\Omega)}\|\widetilde{v}_{\varepsilon m}^{\delta}+ G_{0}\zeta\|_{{\bf H}^{1}(\Omega)}+K^{2}|\zeta|\|\widetilde{v}_{\varepsilon m}^{\delta}\|_{{\bf H}^{1}(\Omega)}\|G_{0}\|_{{\bf H}^{1}(\Omega)}  \quad \mbox{\rm a.e. in } (0, \tau).}
\end{array}
\end{eqnarray*}
Observing that
\begin{eqnarray*}
\int_{0}^{\tau}\left[\|\widetilde{v}_{\varepsilon m}^{\delta}\|_{{\bf L}^{2}(\Omega)}^{\frac{1}{2}}\|\widetilde{v}_{\varepsilon m}^{\delta}\|_{{\bf H}^{1}(\Omega)}^{\frac{3}{2}} \right]^{\frac{4}{3}}\,dt&=&  \int_{0}^{\tau}\|\widetilde{v}_{\varepsilon m}^{\delta}\|_{{\bf L}^{2}(\Omega)}^{\frac{2}{3}}\|\widetilde{v}_{\varepsilon m}^{\delta}\|_{{\bf H}^{1}(\Omega)}^{2}\,dt \\
&\leq&    \|\widetilde{v}_{\varepsilon m}^{\delta}\|_{L^{\infty}(0,\tau;{\bf L}^{2}(\Omega))}^{\frac{2}{3}}\|\widetilde{v}_{\varepsilon m}^{\delta}\|_{L^{2}(0,\tau;{\bf H}^{1}(\Omega))}^{2},
\end{eqnarray*}
we infer from the estimates of Lemma \ref{lem4}  that there exists a constant $C_{\delta}>0$, independent of $m$ and $\varepsilon$,  such that
\begin{eqnarray*}
\displaystyle{   \int_{0}^{\tau} \left\| \frac{\partial \widetilde{v}_{\varepsilon m}^{\delta}}{\partial t}  \right\|_{{\cal V}_{0}'}^{\frac{4}{3}}\,dt \leq C_{\delta} }
\end{eqnarray*}
which concludes the proof. $\Box$
\end{proof}

In order to pass to the limit as $m$ tends to $+ \infty$, we will use also the following Lemma.

\begin{lemma}\label{lem6}
Let $\varepsilon >0$ and $\ell \in L^{2}\bigl(0,\tau; {\bf L}^{2}_+(\Gamma_0) \bigr) \cap L^{\infty}\bigl(0,\tau; {\bf L}^{\infty}_+(\Gamma_0) \bigr)$. Then the mapping $\Psi'_{\varepsilon}$ is Lipschitz continuous from $L^{2}\bigl(0,\tau; {\bf L}^{2}(\Gamma_0) \bigr)$ to $L^{2}\bigl(0,\tau; {\bf L}^{2}(\Gamma_0) \bigr)$.
\end{lemma}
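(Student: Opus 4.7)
The plan is to identify $\Psi'_{\varepsilon}(u)$, as an element of $L^{2}\bigl(0,\tau; {\bf L}^{2}(\Gamma_0) \bigr)$, with the function
$$(t,x') \; \longmapsto \; \ell(t,x') \, \frac{u(t,x')}{\sqrt{\varepsilon^{2}+|u(t,x')|^{2}}},$$
and to deduce the Lipschitz property from a pointwise a.e. estimate. The heart of the argument is a uniform Lipschitz bound on the vector field $F_{\varepsilon}:\mathbb{R}^{d}\to\mathbb{R}^{d}$ defined by $F_{\varepsilon}(z)=z/\sqrt{\varepsilon^{2}+|z|^{2}}$.

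First, I would compute the Jacobian of $F_{\varepsilon}$, namely
$$\bigl(DF_{\varepsilon}(z)\bigr)_{ij}=\frac{\delta_{ij}}{\sqrt{\varepsilon^{2}+|z|^{2}}}-\frac{z_{i}z_{j}}{(\varepsilon^{2}+|z|^{2})^{3/2}}, \qquad 1\le i,j\le d,$$
and observe that its operator norm is bounded uniformly in $z$ by a constant of the form $C/\varepsilon$ (the first term is at most $1/\varepsilon$ and the rank-one second term is at most $|z|^{2}/(\varepsilon^{2}+|z|^{2})^{3/2}\le 1/\varepsilon$). The mean value inequality applied along the segment joining $u(t,x')$ and $v(t,x')$ then yields, for almost every $(t,x')\in(0,\tau)\times\Gamma_{0}$,
$$\bigl|F_{\varepsilon}(u(t,x'))-F_{\varepsilon}(v(t,x'))\bigr| \le \frac{C}{\varepsilon}\,|u(t,x')-v(t,x')|.$$

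Next, using that by assumption $\ell\in L^{\infty}\bigl(0,\tau;{\bf L}^{\infty}_{+}(\Gamma_{0})\bigr)$, multiplication by $\ell$ preserves this inequality up to the factor $\|\ell\|_{L^{\infty}(0,\tau;{\bf L}^{\infty}(\Gamma_{0}))}$, giving
$$\bigl|\Psi'_{\varepsilon}(u)-\Psi'_{\varepsilon}(v)\bigr|(t,x') \le \frac{C\,\|\ell\|_{L^{\infty}(0,\tau;{\bf L}^{\infty}(\Gamma_{0}))}}{\varepsilon}\,|u(t,x')-v(t,x')|.$$
Squaring and integrating over $(0,\tau)\times\Gamma_{0}$ then yields the desired Lipschitz estimate with constant proportional to $\|\ell\|_{L^{\infty}(0,\tau;{\bf L}^{\infty}(\Gamma_{0}))}/\varepsilon$.

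The proof is essentially mechanical once one isolates the right pointwise bound, so I do not expect any real obstacle. The two conceptual points worth stressing are: (i) the regularization $\sqrt{\varepsilon^{2}+|u|^{2}}$ is exactly what removes the $u=0$ singularity of the unregularized nonlinearity $\ell u/|u|$ and makes $F_{\varepsilon}$ globally Lipschitz; and (ii) the $L^{\infty}$ part of the assumption on $\ell$ in (\ref{NS14L}) is essential here, since under only the $L^{2}$-in-time, $L^{2}$-on-$\Gamma_{0}$ information one would not obtain a Lipschitz map between $L^{2}$ spaces. The resulting Lipschitz constant blows up as $\varepsilon\to 0$, which is consistent with the fact that the limiting operator is merely monotone, not Lipschitz.
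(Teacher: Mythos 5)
Your proposal is correct and follows essentially the same route as the paper: both identify $\Psi'_{\varepsilon}(u)$ with $\ell\, u/\sqrt{\varepsilon^{2}+|u|^{2}}$, compute the Jacobian of $z\mapsto z/\sqrt{\varepsilon^{2}+|z|^{2}}$, bound it uniformly by a constant of order $1/\varepsilon$ to get a global Lipschitz bound on the vector field, and then invoke the $L^{\infty}\bigl(0,\tau;{\bf L}^{\infty}_{+}(\Gamma_{0})\bigr)$ bound on $\ell$ to pass to the $L^{2}$-in-time, $L^{2}$-on-$\Gamma_{0}$ setting. The only cosmetic difference is that you bound the operator norm of the Jacobian while the paper bounds each entry by $2/\varepsilon$; your remarks on the role of the regularization and of the $L^{\infty}$ hypothesis are accurate.
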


\begin{proof}
Let us recall that, for all $u \in L^2 \bigl(0,\tau; {\bf L}^2(\Gamma_0) \bigr)$,
$\Psi'_{\varepsilon}(u) \in \bigl( L^2 \bigl(0,\tau; {\bf L}^2(\Gamma_0) \bigr) \bigr)'= L^2 \bigl(0,\tau; {\bf L}^2(\Gamma_0) \bigr) $ is defined by
\begin{eqnarray*}
\langle \Psi'_{\varepsilon}(u),w \rangle = \int_0^{\tau} \int_{\Gamma_0} \ell \frac{u \cdot w}{\sqrt{\varepsilon^{2} + |u|^{2}}}\,dx' dt \quad \forall  w \in L^2 \bigl(0,\tau; {\bf L}^2(\Gamma_0) \bigr)
\end{eqnarray*}
where $\langle \cdot, \cdot \rangle$ denotes the inner product in $L^2 \bigl(0,\tau; {\bf L}^2(\Gamma_0) \bigr) $, i.e.
\begin{eqnarray*}
\Psi'_{\varepsilon}(u) = \ell \frac{u }{\sqrt{\varepsilon^{2} + |u|^{2}}} \quad \forall u \in L^2 \bigl(0,\tau; {\bf L}^2(\Gamma_0) \bigr) .
\end{eqnarray*}
But the mapping
\begin{eqnarray*}
h_{\varepsilon}:  \left\{
\begin{array}{ll}
{\mathbb R}^d \to {\mathbb R}^d \\
\displaystyle u \mapsto  \frac{u }{\sqrt{\varepsilon^{2} + |u|^{2}}}
\end{array} \right.
\end{eqnarray*}
is Fr\'echet differentiable on ${\mathbb R}^d$ and
\begin{eqnarray*}
{\rm Jac}(h_{\varepsilon}) (u) = \left( \frac{\partial h_{\varepsilon i}}{\partial x_j} (u) \right)_{1 \le i,j \le d} = \left( \frac{\delta_{i,j}}{\sqrt{\varepsilon^{2} + |u|^{2}}} - \frac{u_i u_j}{\bigl( \varepsilon^{2} + |u|^{2} \bigr)^{\frac{3}{2}}} \right)_{1 \le i,j \le d}
\end{eqnarray*}
where $\delta_{i,j} =1$ if $i=j$ and $\delta_{i,j} =0$ if $i \not=j$.
It follows that
\begin{eqnarray*}
\left| \frac{\partial h_{\varepsilon i}}{\partial x_j} (u) \right| \le \frac{2}{\varepsilon} \quad \forall i, j \in \{1, \dots, d\}, \  \forall u \in {\mathbb R}^d
\end{eqnarray*}
and $h_{\varepsilon}$ is Lipschitz continuous on ${\mathbb R}^d$. Since $\ell \in L^{\infty}\bigl(0,\tau; {\bf L}^{\infty}_+(\Gamma_0) \bigr)$, we infer that $\Psi_{\varepsilon}'$ is Lipschitz continuous from $L^{2}\bigl(0,\tau; {\bf L}^{2}_+(\Gamma_0) \bigr)$ into $L^{2}\bigl(0,\tau; {\bf L}^{2}_+(\Gamma_0) \bigr)$.
$\Box$
\end{proof}

Now, by using  the estimates obtained in Lemma \ref{lem4} and Lemma \ref{lem5} combined with compactness arguments, we can prove the following existence result for the penalized problems $(P_{\varepsilon}^{\delta})$.

\begin{theorem}\label{lemme2.3}
Let $\varepsilon>0$ and  $\delta >0$. Assume
that (\ref{NS14L}), (\ref{NS20}) and (\ref{G0HYPO})  hold and that  $(\widetilde{v}_{\varepsilon 0}^{\delta})_{\varepsilon>0, \delta>0}$ is a bounded sequence of   ${\bf L}^{2}(\Omega)$. Then, there exists a subsequence of $(\widetilde{v}_{\varepsilon m}^{\delta})_{m \ge 1}$, still denoted $(\widetilde{v}_{\varepsilon m}^{\delta})_{m \ge 1}$, such that
\begin{eqnarray}\label{conver2}
\displaystyle{
\widetilde{v}_{\varepsilon m}^{\delta}\rightharpoonup \widetilde{v}_{\varepsilon}^{\delta}\quad\mbox{weakly star in $L^{\infty}\bigl(0,\tau;{\bf L}^{2}(\Omega) \bigr)$}}
\end{eqnarray}
\begin{eqnarray}\label{conver1}
\displaystyle{
\widetilde{v}_{\varepsilon m}^{\delta}\rightharpoonup \widetilde{v}_{\varepsilon}^{\delta}\quad\mbox{weakly in $ L^{2}(0,\tau;{\cal V}_{0}) $ }}
\end{eqnarray}
and $\widetilde{v}_{\varepsilon}^{\delta}$ is solution of $(P_{\varepsilon}^{\delta})$. Furthermore $\displaystyle \frac{\partial \widetilde{v}_{\varepsilon}^{\delta}}{\partial t}$ belongs to $L^{\frac{4}{3}} (0,\tau; {\cal V}_{0}' )$.
\end{theorem}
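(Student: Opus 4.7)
The plan is to pass to the limit $m\to+\infty$ in the Galerkin scheme (\ref{NS35})--(\ref{NS35-1}), exploiting the uniform (in $m$) bounds (\ref{NS36}), (\ref{NS37}), (\ref{NS37D}), (\ref{eq47}) together with the Lipschitz property of $\Psi'_\varepsilon$ from Lemma \ref{lem6}. First, from the bounds (\ref{NS36})--(\ref{NS37D}) I extract a subsequence (not relabelled) such that (\ref{conver2})--(\ref{conver1}) hold; moreover $\operatorname{div}(\widetilde v^\delta_{\varepsilon m})\rightharpoonup \operatorname{div}(\widetilde v^\delta_\varepsilon)$ weakly in $L^2(0,\tau;L^2(\Omega))$, and (\ref{eq47}) gives $\partial_t \widetilde v^\delta_{\varepsilon m}\rightharpoonup \partial_t \widetilde v^\delta_\varepsilon$ weakly in $L^{4/3}(0,\tau;\mathcal V_0')$, so the limit has $\partial_t\widetilde v_\varepsilon^\delta\in L^{4/3}(0,\tau;\mathcal V_0')$. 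Since the embedding $\mathcal V_0\hookrightarrow \mathbf L^2(\Omega)$ is compact and $\mathbf L^2(\Omega)\hookrightarrow\mathcal V_0'$ is continuous, the Aubin--Lions--Simon lemma yields, along a further subsequence, $\widetilde v^\delta_{\varepsilon m}\to \widetilde v^\delta_\varepsilon$ strongly in $L^2(0,\tau;\mathbf L^2(\Omega))$, and by the continuity of the trace operator combined with a standard interpolation/compactness argument, also strongly in $L^2(0,\tau;\mathbf L^2(\Gamma_0))$.

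Next, I fix $k\ge 1$, test (\ref{NS35}) against $\chi\in\mathcal D(0,\tau)$ and pass to the limit term by term. The linear terms (time derivative pairing, viscosity $a(T;\cdot,\cdot)$, penalty term $\frac{1}{\delta}(\operatorname{div}(\widetilde v^\delta_{\varepsilon m}),\operatorname{div}(w_k))$, source $F_k$, and the linear pieces of the convection involving $G_0\zeta$) pass readily via the weak convergences above. For the trilinear term $b(\widetilde v^\delta_{\varepsilon m},\widetilde v^\delta_{\varepsilon m},w_k)$ and the added technical term $\frac12\int_\Omega \widetilde v^\delta_{\varepsilon m}\operatorname{div}(\widetilde v^\delta_{\varepsilon m})\, w_k\,dx$, I combine the strong $L^2$-convergence of $\widetilde v^\delta_{\varepsilon m}$ with the weak $L^2$-convergence of $\nabla \widetilde v^\delta_{\varepsilon m}$ and $\operatorname{div}(\widetilde v^\delta_{\varepsilon m})$: writing e.g.\ $\int_0^\tau\!\int_\Omega (\widetilde v^\delta_{\varepsilon m})_i\partial_{x_i}(\widetilde v^\delta_{\varepsilon m})_j (w_k)_j\,\chi\,dx\,dt$, the product $\widetilde v^\delta_{\varepsilon m}\otimes w_k\chi$ converges strongly in $L^2$ (since $w_k\in \mathbf H^1(\Omega)\hookrightarrow \mathbf L^6(\Omega)$ and $\widetilde v^\delta_{\varepsilon m}\to\widetilde v^\delta_\varepsilon$ strongly in $L^2(0,\tau;\mathbf L^2(\Omega))$ with a uniform $L^2(\mathbf H^1)$ bound, giving strong convergence in $L^2(0,\tau;\mathbf L^3(\Omega))$ by interpolation), which together with the weak $L^2$-convergence of the gradient gives the required passage to the limit.

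For the friction term $\langle \Psi'_\varepsilon(\widetilde v^\delta_{\varepsilon m}),w_k\chi\rangle$, Lemma \ref{lem6} ensures $\Psi'_\varepsilon$ is Lipschitz from $L^2(0,\tau;\mathbf L^2(\Gamma_0))$ into itself, so the strong trace convergence $\widetilde v^\delta_{\varepsilon m}\to\widetilde v^\delta_\varepsilon$ in $L^2(0,\tau;\mathbf L^2(\Gamma_0))$ yields $\Psi'_\varepsilon(\widetilde v^\delta_{\varepsilon m})\to \Psi'_\varepsilon(\widetilde v^\delta_\varepsilon)$ in $L^2(0,\tau;\mathbf L^2(\Gamma_0))$, which is more than enough to pass to the limit against $w_k\chi$. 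Hence (\ref{NS33}) holds for every test function of the form $\varphi=w_k$, $k\ge 1$, $\chi\in\mathcal D(0,\tau)$; by linearity it holds for any finite linear combination, and by density of $\bigcup_{m\ge 1}\mathrm{Span}\{w_1,\dots,w_m\}$ in $\mathcal V_0$ together with the continuity estimates already used in Lemma \ref{lem5}, it extends to all $\varphi\in\mathcal V_0$. Finally, the initial condition $\widetilde v^\delta_\varepsilon(0,\cdot)=\widetilde v^\delta_{\varepsilon 0}$ is recovered from the convergence $\widetilde v^\delta_{\varepsilon m 0}\to\widetilde v^\delta_{\varepsilon 0}$ in $\mathbf L^2(\Omega)$ together with the fact that $\widetilde v^\delta_{\varepsilon m}\in C([0,\tau];\mathbf L^2(\Omega))$ (since it lies in $H^1(0,\tau;\mathcal V_0)$) and the limit function $\widetilde v^\delta_\varepsilon\in L^2(0,\tau;\mathcal V_0)$ with $\partial_t\widetilde v^\delta_\varepsilon\in L^{4/3}(0,\tau;\mathcal V_0')$ is also in $C([0,\tau];\mathcal V_0')$, after integration by parts in time against a test function $\chi\in C^1([0,\tau])$ with $\chi(\tau)=0$, $\chi(0)=1$.

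I expect the main technical obstacle to be the convective and divergence-correction terms: one must combine the strong $L^2(\mathbf L^2)$-convergence with the weak $L^2(\mathbf H^1)$-convergence at the right integrability level, which is exactly why the extra term $\frac12\int_\Omega \widetilde v^\delta_{\varepsilon m}\operatorname{div}(\widetilde v^\delta_{\varepsilon m})\,\varphi\,dx$ was introduced in (\ref{NS33}), via identity (\ref{formuleb})/(\ref{tech}). The friction nonlinearity, which would usually be delicate for a Tresca-type term, is tamed here by the regularisation: Lemma \ref{lem6} reduces it to a routine continuity argument on the trace.
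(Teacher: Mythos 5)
Your proposal is correct and is essentially the paper's own proof: the same a priori bounds from Lemmas \ref{lem4} and \ref{lem5} give the weak limits, Aubin--Lions compactness gives strong convergence in $L^{2}\bigl(0,\tau;{\bf L}^{2}(\Omega)\bigr)$ and for the traces on $\Gamma_0$, Lemma \ref{lem6} handles the regularized friction term, and the convective and divergence-correction terms pass to the limit by pairing strong with weak convergence. The only differences are routine bookkeeping: you test with the fixed basis functions $w_k$ and conclude by linearity and density, where the paper tests the $m$-th Galerkin equation with the projections $\varphi_m$ of an arbitrary $\varphi\in{\cal V}_{0}$ and uses $\varphi_m\to\varphi$ strongly in ${\bf H}^{1}(\Omega)$; you obtain the trace convergence by interpolating the strong $L^{2}({\bf L}^{2})$ limit against the $L^{2}({\bf H}^{1})$ bound, where the paper reapplies Aubin's lemma with $X={\bf H}^{s}(\Omega)$, $\frac{1}{2}<s<1$, before using the trace theorem; and you identify the initial condition by integration by parts in time against $\chi$ with $\chi(0)=1$, $\chi(\tau)=0$, where the paper invokes Simon's lemma to get convergence in ${\cal C}^{0}(0,\tau;H)$ and evaluates at $t=0$.
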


\begin{proof}
The convergences (\ref{conver2})-(\ref{conver1})  follow immediately from the a priori estimates (\ref{NS36})-(\ref{NS37}) obtained in  Lemma \ref{lem4}.
From the estimate (\ref{eq47}) obtained in Lemma \ref{lem5}, we infer that, possibly extracting another subsequence still denoted  denoted  $(\widetilde{v}_{\varepsilon m}^{\delta})_{m\geq 1}$, we have
\begin{eqnarray}\label{conver3}
\frac{\partial \widetilde{v}_{\varepsilon m}^{\delta} }{\partial t} \rightharpoonup \frac{\partial \widetilde{v}_{\varepsilon}^{\delta}}{\partial t} \quad\mbox{weakly in $L^{\frac{4}{3}}(0,\tau;{\cal V}_{0}')$.}
\end{eqnarray}
By using  Aubin's lemma \cite{temam} and the convergences (\ref{conver1}) and (\ref{conver3}), with $X_{0}={\cal V}_{0}$, $X={\bf L}^{4}(\Omega)$ and $X_{1}={\cal V}_{0}'$ we obtain
\begin{eqnarray*}
\widetilde{v}_{\varepsilon m}^{\delta}\rightarrow \widetilde{v}_{\varepsilon}^{\delta}\quad\mbox{strongly in $L^{2}(0,\tau;{\bf L}^{4}(\Omega))$.}
\end{eqnarray*}
We may use again  Aubin's lemma  with $X_{0}={\cal V}_{0}$, $X={\bf H}^{s}(\Omega)$ and $X_{1}={\cal V}_{0}'$ with $ \frac{1}{2}<s<1$: the embedding of $X_{0}$ into $X$ is compact, so we obtain
\begin{eqnarray*}
\widetilde{v}_{\varepsilon m}^{\delta}\rightarrow \widetilde{v}_{\varepsilon}^{\delta}\quad\mbox{strongly in $L^{2}(0,\tau;{\bf H}^{s}(\Omega))$}.
\end{eqnarray*}
Then, with  trace theorem \cite{lions2}, we infer that
\begin{eqnarray*}
\widetilde{v}_{\varepsilon m}^{\delta}\rightarrow \widetilde{v}_{\varepsilon}^{\delta}\quad\mbox{strongly in $L^{2}(0,\tau;{\bf L}^{2}(\Gamma_0))$}
\end{eqnarray*}
where we identify here the functions $\widetilde{v}_{\varepsilon m}^{\delta}$ and $\widetilde{v}_{\varepsilon}^{\delta}$ with their trace on $\Gamma_0$.

Now,  using  (\ref{conver2})-(\ref{conver3}) and  Simon's lemma \cite{simon87} and  possibly extracting another subsequence, still  denoted  $(\widetilde{v}_{\varepsilon m}^{\delta})_{m\geq 1}$,   we obtain
\begin{eqnarray}\label{lemsimonc3}
\widetilde{v}_{\varepsilon m}^{\delta}\rightarrow \widetilde{v}_{\varepsilon}^{\delta}\quad\mbox{strongly in ${\cal C}^{0}(0,\tau;H)$,}
\end{eqnarray}
for any Banach space $H$  such that ${\bf L}^{2}(\Omega)\subset H \subset {\cal V}_{0}'$ with continuous injections and compact embedding of ${\bf L}^{2}(\Omega)$ into $H$.

Let $\chi\in {\cal D}(0,\tau )$ and $\varphi\in {\cal V}_{0}$. For all $m \ge 1$ we define again $\varphi_{m}$ as the othogonal projection with respect to the  inner product of
${\bf H}^{1}(\Omega)$
of $\varphi$ on   ${\rm Span} \bigl\{w_{1},\ldots,w_{m} \bigr\}$.
With  (\ref{NS35}) we have
\begin{eqnarray*}
\begin{array}{ll}
\displaystyle{ \int_0^{\tau}  \left[  \left(\frac{\partial \widetilde{v}_{\varepsilon m}^{\delta}}{\partial t},\varphi_{m} \right)   + b(\widetilde{v}_{\varepsilon m}^{\delta},\widetilde{v}_{\varepsilon m}^{\delta},\varphi_{m})
 +\frac{1}{2}\int_{\Omega}
\widetilde{v}_{\varepsilon m}^{\delta}div(\widetilde{v}_{\varepsilon m}^{\delta})\varphi_{m} \, dx\right] \chi \, dt}
\\
\displaystyle{
+ \frac{1}{\delta}\int_{0}^{\tau} \left(div(\widetilde{v}_{\varepsilon m}^{\delta}),div (\varphi_{m})\chi \right)\,dt

+a(T;\widetilde{v}_{\varepsilon m}^{\delta},\varphi_{m}\chi)+ \left\langle \Psi'_{\varepsilon}(\widetilde{v}_{\varepsilon m}^{\delta}),\varphi_{m}\chi\right\rangle
=
\int_0^{\tau} (f,\varphi_{m})\chi \, dt } \\
\displaystyle-  a(T;G_{0} \zeta ,\varphi_{m}\chi)  - \int_0^{\tau} \left[ \left(G_{0}\frac{\partial \zeta}{\partial t},\varphi_{m} \right)
+  b(G_{0} \zeta ,\widetilde{v}_{\varepsilon m}^{\delta}+G_{0}\zeta,\varphi_{m})
+  b(\widetilde{v}_{\varepsilon m}^{\delta},G_{0} \zeta ,\varphi_{m} )  \right] \chi \, dt.
\end{array}
\end{eqnarray*}
With an integration by parts of the first term we get
\begin{eqnarray*}
\begin{array}{ll}
\displaystyle{ \int_0^{\tau}   \left( \widetilde{v}_{\varepsilon m}^{\delta},\varphi_{m} \right)  \frac{\partial \chi}{\partial t} \, dt
 + \int_0^{\tau} \left[ b(\widetilde{v}_{\varepsilon m}^{\delta},\widetilde{v}_{\varepsilon m}^{\delta},\varphi_{m}) +\frac{1}{2}\int_{\Omega} \widetilde{v}_{\varepsilon m}^{\delta}div(\widetilde{v}_{\varepsilon m}^{\delta}) \varphi_{m} \, dx\right] \chi \, dt
}
\\
\displaystyle{
+ \frac{1}{\delta}\int_{0}^{\tau}\left(div(\widetilde{v}_{\varepsilon m}^{\delta}),div (\varphi_{m}) \chi\right)\,dt
+a(T;\widetilde{v}_{\varepsilon m}^{\delta},\varphi_{m}\chi)+ \left\langle \Psi'_{\varepsilon}(\widetilde{v}_{\varepsilon m}^{\delta}),\varphi_{m}\chi\right\rangle
=
\int_0^{\tau} (f,\varphi_{m})\chi \, dt  } \\
\displaystyle -  a(T;G_{0} \zeta ,\varphi_{m}\chi)- \int_0^{\tau} \left[ \left(G_{0}\frac{\partial \zeta}{\partial t},\varphi_{m} \right)
+  b(G_{0} \zeta ,\widetilde{v}_{\varepsilon m}^{\delta}+G_{0}\zeta,\varphi_{m})
+  b(\widetilde{v}_{\varepsilon m}^{\delta},G_{0} \zeta ,\varphi_{m} )
\right] \chi \, dt.
\end{array}
\end{eqnarray*}
Reminding that $(\varphi_m)_{m \ge 1}$ converges strongly to $\varphi$ in ${\bf H}^1(\Omega)$ and using Lemma \ref{lem6},
 we can pass to the limit in all the terms and we obtain
\begin{eqnarray*}
\begin{array}{ll}
\displaystyle{ \int_0^{\tau}   \left( \widetilde{v}_{\varepsilon }^{\delta},\varphi \right)  \frac{\partial \chi}{\partial t}  \, dt  + \int_0^{\tau} \left[ b(\widetilde{v}_{\varepsilon }^{\delta},\widetilde{v}_{\varepsilon }^{\delta},\varphi)
+\frac{1}{2}\int_{\Omega} \widetilde{v}_{\varepsilon }^{\delta}div(\widetilde{v}_{\varepsilon }^{\delta}) \varphi \, dx
+ \frac{1}{\delta}\left(div(\widetilde{v}_{\varepsilon }^{\delta}),div (\varphi ) \right)
\right] \chi \, dt
}
\\
\displaystyle{+a(T;\widetilde{v}_{\varepsilon }^{\delta},\varphi \chi)+ \left\langle \Psi'_{\varepsilon}(\widetilde{v}_{\varepsilon }^{\delta}),\varphi \chi\right\rangle
=
\int_0^{\tau} (f,\varphi )\chi \, dt -  a(T;G_{0} \zeta ,\varphi \chi) } \\
\displaystyle - \int_0^{\tau} \left[ \left(G_{0}\frac{\partial \zeta}{\partial t},\varphi  \right)
+  b(G_{0} \zeta ,\widetilde{v}_{\varepsilon }^{\delta}+G_{0}\zeta,\varphi )
+  b(\widetilde{v}_{\varepsilon }^{\delta},G_{0} \zeta ,\varphi  )
 \right] \chi \, dt.
\end{array}
\end{eqnarray*}
which gives (\ref{NS33}). It remains to chek that the initial condition (\ref{NS33init}) is satisfied. Indeed, with (\ref{lemsimonc3}), we have
\begin{eqnarray*}
\widetilde{v}_{\varepsilon m}^{\delta} (0) \rightarrow \widetilde{v}_{\varepsilon}^{\delta}(0)  \quad \mbox{strongly in $H$}
\end{eqnarray*}
with ${\bf L}^{2}(\Omega)\subset H \subset {\cal V}_{0}'$ and we have also
\begin{eqnarray*}
\widetilde{v}_{\varepsilon m}^{\delta} (0)  = \widetilde{v}_{\varepsilon m 0}^{\delta}\rightarrow \widetilde{v}_{\varepsilon 0}^{\delta}  \quad \mbox{strongly in ${\bf L}^2 (\Omega)$.}
\end{eqnarray*}
Hence $\widetilde{v}_{\varepsilon}^{\delta}(0) = \widetilde{v}_{\varepsilon 0}^{\delta}$.
$\Box$
\end{proof}

\section{Properties of the approximate pressure}
\label{approximate_pressure}

For any $\varepsilon >0$ and $\delta >0$ we define and approximate pressure $p^{\delta}_{\varepsilon} \in L^{2} \bigl(0,\tau;L^{2}(\Omega) \bigr)$ by
\begin{eqnarray}\label{pnote}
p^{\delta}_{\varepsilon}=-\frac{1}{\delta}div (\widetilde{v}^{\delta}_{\varepsilon})
\end{eqnarray}
where $\widetilde{v}_{\varepsilon}^{\delta}$ is the solution of the penalized problem $(P_{\varepsilon}^{\delta})$ obtained in the previous Section. From  (\ref{NS33}) we get
\begin{equation}\label{eqP3}
\begin{array}{ll}
\displaystyle
\left\langle \frac{d}{dt} \left( \widetilde{v}_{\varepsilon}^{\delta}, \varphi \right) , \chi\right\rangle_{{\cal D}'(0,\tau), {\cal D}(0,\tau)}  +
 \bigl\langle b(\widetilde{v}_{\varepsilon}^{\delta},\widetilde{v}_{\varepsilon}^{\delta},\varphi) ,\chi \bigr\rangle_{{\cal D}'(0,\tau), {\cal D}(0,\tau)}\\
\displaystyle
+\frac{1}{2}\bigl\langle \int_{\Omega}
 \widetilde{v}_{\varepsilon}^{\delta}div(\widetilde{v}_{\varepsilon}^{\delta}) \varphi \, dx , \chi \bigr\rangle_{{\cal D}'(0,\tau), {\cal D}(0,\tau)} - \bigl\langle \bigl( p_{\varepsilon}^{\delta} , div(\varphi) \bigr), \chi  \bigr\rangle_{{\cal D}'(0,\tau), {\cal D}(0,\tau)}
 + a(T;\widetilde{v}_{\varepsilon}^{\delta},\varphi\chi)\\

\displaystyle  + \bigl\langle \Psi'_{\varepsilon}(\widetilde{v}_{\varepsilon}^{\delta}), \varphi  \chi \bigr\rangle
 =
\bigl\langle (f,\varphi ), \chi \bigr\rangle_{{\cal D}'(0,\tau), {\cal D}(0,\tau)} - a(T;G_{0}\zeta ,\varphi \chi)\\
\displaystyle
- \left\langle \left(G_{0}\frac{\partial \zeta}{\partial t},\varphi \right), \chi \right\rangle_{{\cal D}'(0,\tau), {\cal D}(0,\tau)}
  -  \bigl\langle b(G_{0}\zeta ,\widetilde{v}_{\varepsilon}^{\delta}+G_{0}\zeta,\varphi), \chi \bigr\rangle_{{\cal D}'(0,\tau), {\cal D}(0,\tau)} \\
  \displaystyle - \bigl\langle b(\widetilde{v}_{\varepsilon}^{\delta},G_{0}\zeta ,\varphi) , \chi \bigr\rangle_{{\cal D}'(0,\tau), {\cal D}(0,\tau)}, \quad \forall \varphi\in {\cal V}_{0}, \  \forall \chi\in {\cal D}(0,\tau).
\end{array}
\end{equation}
Furthermore, with Green's formula, we obtain
\begin{eqnarray}
\label{p=0}
\int_{\Omega} p_{\varepsilon}^{\delta}\,dx = -\frac{1}{\delta}\int_{\partial\Omega}\widetilde{v}_{\varepsilon}^{\delta} \cdot n \,dx = 0 \quad \mbox{a.e. in $(0, \tau)$}
\end{eqnarray}
and, with (\ref{NS37D}) and (\ref{conver1}), we have
\begin{eqnarray*}
\bigl\| p_{\varepsilon}^{\delta}\bigr\|_{L^{2} (0,\tau;L^{2}(\Omega))} \le\frac{C}{\sqrt{\delta}}
\end{eqnarray*}
where $C$ is a constant independent of $\delta$ and $\varepsilon$.  Unfortunately this last estimate does not allow us to pass to the limit in the term $\bigl\langle \bigl( p_{\varepsilon}^{\delta}, div(\varphi) \bigr), \chi  \bigr\rangle_{{\cal D}'(0,\tau), {\cal D}(0,\tau)}$ as $\delta$ tends to zero. So we will establish an estimate independent of $\varepsilon$ and $\delta$ by using the same kind of technique as in \cite{boukpaoli}).

\begin{lemma} \label{lem-pression}
Under the same assumptions as in Lemma \ref{lem4},
there exists a constant $C$, independent of $\delta$ and $\varepsilon$, such that
\begin{eqnarray}
\label{estim-pression}
\bigl\| p_{\varepsilon}^{\delta} \bigr\|_{H^{-1} (0,\tau;L^{2}(\Omega))} \le C.
\end{eqnarray}
\end{lemma}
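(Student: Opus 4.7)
The space $H^{-1}(0,\tau;L^2(\Omega))$ is the topological dual of $H^1_0(0,\tau;L^2(\Omega))$, so it suffices to show that
\[
\left| \int_0^{\tau} \bigl( p_{\varepsilon}^{\delta}, \phi \bigr)\,dt \right| \le C \, \|\phi\|_{H^1_0(0,\tau;L^2(\Omega))}
\]
with $C$ independent of $\delta$ and $\varepsilon$, for every $\phi$ in that space. Since $p_{\varepsilon}^{\delta}$ has zero spatial mean by (\ref{p=0}), only the mean-zero part of $\phi$ contributes, and we may assume $\phi(t,\cdot)\in L^2_0(\Omega)$ for a.e.\ $t$. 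The central idea is to lift $\phi$ to a velocity test-function via a Bogovskii-type right inverse of the divergence and to read the pairing $(p_{\varepsilon}^{\delta},\phi)$ off the momentum equation (\ref{eqP3}).

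Since $\Omega$ is bounded and Lipschitz, Bogovskii's theorem provides a bounded linear operator ${\cal B}: L^2_0(\Omega)\to {\bf H}^1_0(\Omega)$ with $div({\cal B}q)=q$ and $\|{\cal B}q\|_{{\bf H}^1(\Omega)}\le C_{\Omega}\|q\|_{L^2(\Omega)}$. Setting $w(t,\cdot)={\cal B}(\phi(t,\cdot))$ pointwise in time we have $\partial_t w = {\cal B}(\partial_t \phi)$ by linearity, hence
\[
\|w\|_{L^2(0,\tau;{\bf H}^1(\Omega))}+\|\partial_t w\|_{L^2(0,\tau;{\bf H}^1(\Omega))}\le C_{\Omega}\|\phi\|_{H^1_0(0,\tau;L^2(\Omega))}.
\]
Since ${\bf H}^1_0(\Omega)\subset {\cal V}_{0}$, we have $w\in H^1_0(0,\tau;{\cal V}_{0})$ with $w(0)=w(\tau)=0$ and $div(w)=\phi$. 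By density of $\{\sum_k\varphi_k\chi_k:\varphi_k\in{\cal V}_{0},\chi_k\in{\cal D}(0,\tau)\}$ in $H^1_0(0,\tau;{\cal V}_{0})$, the identity (\ref{eqP3}) extends to such time-dependent test functions. Testing with $w$, integrating the time-derivative term by parts (using $w(0)=w(\tau)=0$), and invoking $div(w)=\phi$, we get
\[
\int_0^{\tau}(p_{\varepsilon}^{\delta},\phi)\,dt = -\int_0^{\tau}(\widetilde{v}_{\varepsilon}^{\delta},\partial_t w)\,dt + R(\widetilde{v}_{\varepsilon}^{\delta},w),
\]
where $R(\widetilde{v}_{\varepsilon}^{\delta},w)$ collects the two trilinear terms in $\widetilde{v}_{\varepsilon}^{\delta}$, the viscosity contribution $a(T;\widetilde{v}_{\varepsilon}^{\delta},w)$, the friction term $\langle\Psi'_{\varepsilon}(\widetilde{v}_{\varepsilon}^{\delta}),w\rangle$, and the data contributions involving $f$, $G_0$ and $\zeta$.

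It remains to bound each piece by $C\|\phi\|_{H^1_0(0,\tau;L^2)}$ uniformly in $\delta,\varepsilon$. The linear pieces are routine: $|\int(\widetilde{v}_{\varepsilon}^{\delta},\partial_t w)\,dt|\le \sqrt{\tau}\|\widetilde{v}_{\varepsilon}^{\delta}\|_{L^{\infty}(0,\tau;{\bf L}^2)}\|\partial_t w\|_{L^2(0,\tau;{\bf L}^2)}$ is controlled by (\ref{NS36}); the viscosity term is controlled by Cauchy--Schwarz and (\ref{NS37}); the friction term uses $|\Psi'_{\varepsilon}(u)|\le \ell$ pointwise on $\Gamma_0$ together with the trace theorem on $w$ and $\ell\in L^2(0,\tau;{\bf L}^2(\Gamma_0))$; the data and $G_0$ contributions use $f\in L^2(L^2)$, $G_0\in{\bf H}^2(\Omega)$ and $\zeta\in{\cal C}^{\infty}([0,\tau])$ via (\ref{NS14L})--(\ref{G0HYPO}).

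The main obstacle is the trilinear term in dimension $d=3$, where we cannot exploit antisymmetry since $w$ is not divergence-free. Using $|b(u,u,w)|\le\|u\|_{{\bf L}^3}\|\nabla u\|_{{\bf L}^2}\|w\|_{{\bf L}^6}$, the interpolation $\|u\|_{{\bf L}^3}\le C\|u\|_{{\bf L}^2}^{1/2}\|u\|_{{\bf H}^1}^{1/2}$ and the embedding ${\bf H}^1\hookrightarrow {\bf L}^6$, Hölder's inequality in time yields
\[
\left|\int_0^{\tau}b(\widetilde{v}_{\varepsilon}^{\delta},\widetilde{v}_{\varepsilon}^{\delta},w)\,dt\right|
\le C\|\widetilde{v}_{\varepsilon}^{\delta}\|_{L^{\infty}(0,\tau;{\bf L}^2)}^{1/2}\|\widetilde{v}_{\varepsilon}^{\delta}\|_{L^2(0,\tau;{\bf H}^1)}^{3/2}\|w\|_{L^4(0,\tau;{\bf H}^1)}.
\]
The rescue is that $w\in H^1_0(0,\tau;{\bf H}^1)\hookrightarrow{\cal C}([0,\tau];{\bf H}^1)\hookrightarrow L^4(0,\tau;{\bf H}^1)$, so the right-hand side is bounded by $C\|\phi\|_{H^1_0(0,\tau;L^2)}$ thanks to Lemma \ref{lem4}. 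The auxiliary term $\frac{1}{2}\int\widetilde{v}_{\varepsilon}^{\delta}\,div(\widetilde{v}_{\varepsilon}^{\delta})\,w\,dx\,dt$ and the mixed terms involving $G_0\zeta$ are estimated analogously (and more easily, thanks to the fixed regularity of $G_0$). Summing all bounds produces the desired estimate (\ref{estim-pression}).
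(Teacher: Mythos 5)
Your proof is correct and follows essentially the same route as the paper: the operator $P$ that the paper borrows from Lions is exactly your Bogovskii-type right inverse of the divergence, and both arguments test (\ref{eqP3}) with the lifted test function, integrate the time-derivative term by parts, exploit the $H^1$-in-time (hence $L^\infty$-in-time) control of the test function to absorb the trilinear terms via the $L^{\infty}(0,\tau;{\bf L}^{2}(\Omega))\cap L^{2}(0,\tau;{\bf H}^{1}(\Omega))$ bounds of Lemma \ref{lem4}, and use (\ref{p=0}) together with the density of ${\cal D}(0,\tau)\otimes L^{2}(\Omega)$ in $H^{1}_{0}\bigl(0,\tau;L^{2}(\Omega)\bigr)$ to conclude. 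The only difference is organizational: the paper works with separable test functions $\varphi\chi$ throughout and invokes density at the very end, whereas you pass to general time-dependent test functions first.
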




\begin{proof}
Let $\chi \in {\cal D}(0,\tau)$ and $w\in L^{2}_{0}(\Omega)$. Then there exists  $\varphi  \in {\bf H}^1_0(\Omega)$ such that
\begin{eqnarray*}
div(\varphi) = w \quad \mbox{in $\Omega$}
\end{eqnarray*}
and $\varphi=P(w)$ where $P$ is a linear continuous operator from $L^{2}_{0}(\Omega)$ into ${\bf  H}^{1}_{0}(\Omega)$ (see \cite{lions}).
With an integration by parts of the first term of (\ref{eqP3}), we get
\begin{eqnarray*}
\begin{array}{ll}
\displaystyle
\int_0^{\tau} \bigl( p_{\varepsilon}^{\delta}, div(\varphi) \bigr) \chi \, dt
=
 - \int_0^{\tau} \left( \widetilde{v}_{\varepsilon}^{\delta}, \varphi \right) \frac{\partial  \chi}{\partial t} \, dt
+ \int_0^{\tau} \left[ b(\widetilde{v}_{\varepsilon}^{\delta},\widetilde{v}_{\varepsilon}^{\delta},\varphi)
+\frac{1}{2} \int_{\Omega}
 \widetilde{v}_{\varepsilon}^{\delta}div(\widetilde{v}_{\varepsilon}^{\delta}) \varphi \, dx \right] \chi  \, dt
 \\
\displaystyle
+ a(T;\widetilde{v}_{\varepsilon}^{\delta},\varphi\chi)
- \int_0^{\tau}  (f,\varphi ) \chi \, dt  + a(T;G_{0}\zeta ,\varphi \chi) \\
\displaystyle
+ \int_0^{\tau} \left[
\left(G_{0}\frac{\partial \zeta}{\partial t},\varphi \right) +  b(G_{0}\zeta ,\widetilde{v}_{\varepsilon}^{\delta}+G_{0}\zeta,\varphi) +  b(\widetilde{v}_{\varepsilon}^{\delta},G_{0}\zeta ,\varphi) \right] \chi \, dt
\end{array}
\end{eqnarray*}
Let us denote   $\varphi\chi=  \eta$ and recall that $K$ is the constant of the continuous injection of ${\bf H}^1 (\Omega)$ into ${\bf L}^4(\Omega)$. We get
\begin{eqnarray*}
\begin{array}{ll}
\displaystyle{
\left| \int_{0}^{\tau}
 \bigl[ b(\widetilde{v}_{\varepsilon }^{\delta},\widetilde{v}_{\varepsilon }^{\delta},\varphi )
+\frac{1}{2} \int_{\Omega} \widetilde{v}_{\varepsilon }^{\delta}div(\widetilde{v}_{\varepsilon }^{\delta}) \varphi \, dx
\bigr] \chi \,dt \right|\leq}\\\displaystyle{
\leq \left( 1 + \frac{\sqrt{3}}{2} \right)
 \|\widetilde{v}_{\varepsilon }^{\delta}\|_{L^{2}(0,\tau;{\bf L}^{4}(\Omega))}
 \|\widetilde{v}_{\varepsilon }^{\delta}\|_{L^{2}(0,\tau;{\bf H}^{1}(\Omega))}
\|\eta\|_{L^{\infty}(0,\tau;{\bf L}^4(\Omega))}}
\\
\displaystyle \le   K^2 \left( 1 + \frac{\sqrt{3}}{2} \right)  \|\widetilde{v}_{\varepsilon }^{\delta}\|^2_{L^{2}(0,\tau; {\bf H}^{1} (\Omega) )}\|\eta\|_{L^{\infty}(0,\tau;{\bf H}^{1} (\Omega)) }.
\end{array}
\end{eqnarray*}
Then we obtain
\begin{eqnarray*}
\begin{array}{ll}
\displaystyle{
\left| \int_{0}^{\tau}(p^{\delta}_{\varepsilon},w) \chi \,dt\right|
\leq
  \|\widetilde{v}_{\varepsilon }^{\delta}\|_{L^{2}(0,\tau;{\bf L}^{2}(\Omega))}
\|\frac{\partial \eta}{\partial t}\|_{L^{2}(0,\tau;{\bf L}^{2}(\Omega))} }
\\
\displaystyle{
+ K^2   \left( 1 + \frac{\sqrt{3}}{2} \right)
 \|\widetilde{v}_{\varepsilon }^{\delta}\|^2_{L^{2}(0,\tau; {\bf H}^{1} (\Omega) )}
\|\eta\|_{L^{\infty}(0,\tau;{\bf H}^{1} (\Omega)) }
 }
\\
\displaystyle{
+ \mu_{*}\sqrt{\tau}
\|\widetilde{v}_{\varepsilon }^{\delta}\|_{L^{2}(0,\tau; {\bf H}^{1} (\Omega) )}
\|\eta\|_{L^{\infty}(0,\tau; {\bf H}^{1} (\Omega))}
 +\sqrt{\tau}\|f\|_{L^{2}(0,\tau;{\bf L}^{2}(\Omega))}\|\eta\|_{L^{\infty}(0,\tau;{\bf L}^{2}(\Omega))}}
\\
\displaystyle{
+\mu_{*}\|G_{0}\|_{{\bf H}^{1}(\Omega)}\|\zeta\|_{L^{1}(0,\tau)}\|\eta\|_{L^{\infty}(0,\tau; {\bf H}^{1}(\Omega))}
+\|G_{0}\|_{{\bf L}^{2}(\Omega)}\left\|\frac{\partial\zeta}{\partial t}\right\|_{L^{1}(0,\tau)}\|\eta\|_{L^{\infty}(0,\tau; {\bf L}^{2}(\Omega))}
}\\
\displaystyle{
+ K^2  \|G_{0}\|_{{\bf H}^{1}(\Omega)} \|\zeta\|_{L^{2}(0,\tau)} \|\widetilde{v}_{\varepsilon}^{\delta}+G_{0}\zeta\|_{L^{2}(0,\tau; {\bf H}^{1}(\Omega))}\|\eta\|_{L^{\infty}(0,\tau; {\bf H}^{1}(\Omega))}
}\\
\displaystyle{+ K^2  \|G_{0}\|_{{\bf H}^{1}(\Omega)}\|\zeta\|_{L^{2}(0,\tau)}\|\widetilde{v}_{\varepsilon}^{\delta}\|_{L^{2}(0,\tau;{\bf H}^1(\Omega) )}\|\eta\|_{L^{\infty}(0,\tau; {\bf H}^{1} (\Omega))}.}
\end{array}
\end{eqnarray*}
By using the continuity of the operator $P$ and the continuous injection of $H^1(0, \tau)$ into $L^{\infty}(0,\tau)$, we have
\begin{eqnarray*}
\|\eta\|_{L^{\infty}(0,\tau; {\bf H}^{1} (\Omega))} = \|  \chi\|_{L^{\infty}(0, \tau) }
\bigl\|P (w) \bigr\|_{{\bf H}^{1}(\Omega)}
\leq C  \|  \chi\|_{H^{1}(0, \tau) } \|w\|_{ {\bf L}^{2}(\Omega)} \le  C  \|\eta \|_{H^{1} (0,\tau;{\bf  L}^{2} (\Omega))},
\end{eqnarray*}
where $C$ is a constant independent of $\delta$ and $\varepsilon$. With the estimates (\ref{NS36}) and (\ref{NS37}),  we
infer that $\widetilde{v}_{\varepsilon}^{\delta}$ is bounded in $L^{2}(0,\tau;{\bf H}^1(\Omega) ) \cap L^{\infty}(0,\tau; {\bf L}^{2}(\Omega))$ independently of $\delta$ and $\varepsilon$, and we
 obtain
\begin{eqnarray}\label{eqP4}
\left|\int_{0}^{\tau}(p^{\delta}_{\varepsilon},w) \chi\,dt\right| \leq C\| w \chi\|_{H^{1} (0,\tau; L^{2} (\Omega))}\quad \forall  w\in L^{2}_{0}(\Omega), \  \forall \chi \in {\cal D}(0, \tau)
\end{eqnarray}
where we denote again by $C$ a constant independent of $\delta$ and $\varepsilon$.

Let now $\widetilde{w} \in  L^{2}(\Omega)$. We can apply (\ref{eqP4}) with
\begin{eqnarray*}
w= \widetilde{w}-\frac{1}{mes \Omega}\int_{\Omega}\widetilde{w} \,dx.
\end{eqnarray*}
Indeed, $w \in L^{2}_{0}(\Omega)$.  Furthermore, with (\ref{p=0}), we have
\begin{eqnarray*}
\begin{array}{ll}
\displaystyle \left|\int_{0}^{\tau} \left( p^{\delta}_{\varepsilon},\widetilde{w}-\frac{1}{mes \Omega}\int_{\Omega}\widetilde{w}\,dx \right) \chi \,dt\right|=\\\displaystyle
  =
\left|\int_{0}^{\tau}(p^{\delta}_{\varepsilon},\widetilde{w}) \chi \,dt-\frac{1}{mes \Omega}\int_{0}^{\tau}\left(\int_{\Omega}p_{\varepsilon}^{\delta}\,dx\right)\left(\int_{\Omega}\widetilde{w} \,dx\right) \chi \,dt\right|
 =  \left|\int_{0}^{\tau}(p^{\delta}_{\varepsilon},\widetilde{w}) \chi \,dt \right|.
 \end{array}
\end{eqnarray*}
Observing that $ \|w\|_{L^{2}(\Omega)} \leq \|\widetilde{w}\|_{L^{2}(\Omega)} $,
we obtain
\begin{eqnarray}\label{PL2}
\left|\int_{0}^{\tau}(p^{\delta}_{\varepsilon}, w ) \chi \,dt\right|  = \left|\int_{0}^{\tau}(p^{\delta}_{\varepsilon}, \widetilde{w}) \chi \,dt\right| \leq C\| \widetilde{w} \chi \|_{H^{1}(0,\tau; L^{2}(\Omega))}, \  \forall  \widetilde{w} \in L^{2}(\Omega) , \  \forall \chi \in {\cal D}(0, \tau).
\end{eqnarray}
Then the density of ${\cal D}(0,\tau) \otimes L^2(\Omega)$ into $H^{1}_{0}(0,\tau;L^{2} (\Omega))$ allows us to conclude.
$\Box$
\end{proof}

\section{Existence results for the problems $(P_{\varepsilon})$ and $(P)$}
\label{fin}

Now we can pass to the  limit in the penalized problems $(P_{\varepsilon}^{\delta})$ when $\delta$ tends to zero.

\begin{theorem}\label{prop1}
Let $\varepsilon>0$ and assume that $(\widetilde{v}_{\varepsilon 0}^{\delta})_{\varepsilon>0, \delta>0}$ is a bounded sequence of   ${\bf L}^{2}(\Omega)$. Assume moreover that (\ref{NS14L}), (\ref{NS20}), (\ref{G0HYPO}) and (\ref{init2}) hold. Then, there exists a subsequence of $(\widetilde{v}_{\varepsilon }^{\delta}, p_{\varepsilon}^{\delta})_{\delta >0}$, still denoted $(\widetilde{v}_{\varepsilon }^{\delta}, p_{\varepsilon}^{\delta})_{\delta >0}$, such that
\begin{eqnarray}\label{conver2bis}
\displaystyle{
\widetilde{v}_{\varepsilon }^{\delta}\rightharpoonup \widetilde{v}_{\varepsilon} \quad\mbox{weakly star in $L^{\infty}\bigl(0,\tau;{\bf L}^{2}(\Omega) \bigr)$}}
\end{eqnarray}
\begin{eqnarray}\label{conver1bis}
\displaystyle{
\widetilde{v}_{\varepsilon }^{\delta}\rightharpoonup \widetilde{v}_{\varepsilon}\quad\mbox{weakly in $ L^{2}(0,\tau;{\cal V}_{0}) $ }}
\end{eqnarray}
\begin{eqnarray}\label{conver3bis}
\displaystyle{
\widetilde{p}_{\varepsilon }^{\delta}\rightharpoonup \widetilde{p}_{\varepsilon} \quad\mbox{weakly  in $H^{-1}\bigl(0,\tau;{ L}^{2}_0 (\Omega) \bigr)$}}
\end{eqnarray}
and $(\widetilde{v}_{\varepsilon}, p_{\varepsilon})$ is solution of $(P_{\varepsilon})$. Furthermore $\displaystyle \frac{\partial \widetilde{v}_{\varepsilon}}{\partial t}$ belongs to $L^{\frac{4}{3}}\bigl(0,\tau;({\cal V}_{0div})' \bigr)$.
\end{theorem}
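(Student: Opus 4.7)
The plan is to extract convergent subsequences from $(\widetilde{v}_{\varepsilon}^{\delta}, p_{\varepsilon}^{\delta})_{\delta>0}$ via the uniform (in $\delta$) estimates already established in Lemmas \ref{lem4} and \ref{lem-pression}, and then pass to the limit in the variational formulation (\ref{NS33}) term by term.

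First I would fix $\varepsilon>0$ and gather the $\delta$--independent estimates. Lemma \ref{lem4} gives uniform bounds on $\widetilde{v}_{\varepsilon}^{\delta}$ in $L^{\infty}(0,\tau;{\bf L}^{2}(\Omega))\cap L^{2}(0,\tau;{\cal V}_{0})$ and, crucially, $\|div(\widetilde{v}_{\varepsilon}^{\delta})\|_{L^{2}(0,\tau;L^{2}(\Omega))}\le C\sqrt{\delta}$, while Lemma \ref{lem-pression} gives the uniform bound on $p_{\varepsilon}^{\delta}$ in $H^{-1}(0,\tau;L^{2}(\Omega))$; combined with (\ref{p=0}) this yields a uniform bound in $H^{-1}(0,\tau;L^{2}_{0}(\Omega))$. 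These bounds immediately yield (\ref{conver2bis})--(\ref{conver3bis}) by weak/weak$^{*}$ compactness, and the divergence estimate forces $div(\widetilde{v}_{\varepsilon}^{\delta})\to 0$ strongly in $L^{2}(0,\tau;L^{2}(\Omega))$, hence $div(\widetilde{v}_{\varepsilon})=0$ and $\widetilde{v}_{\varepsilon}\in L^{2}(0,\tau;{\cal V}_{0div})$.

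Next I need strong convergence of $\widetilde{v}_{\varepsilon}^{\delta}$ so that the nonlinear terms and $\Psi'_{\varepsilon}$ pass to the limit. Since the bound from Lemma \ref{lem5} is $\delta$--dependent, I would re-estimate $\partial_{t}\widetilde{v}_{\varepsilon}^{\delta}$ against test functions in ${\cal V}_{0div}$: for such $\varphi$ the penalty term $\frac{1}{\delta}(div(\widetilde{v}_{\varepsilon}^{\delta}),div(\varphi))$ vanishes, and all remaining terms in (\ref{NS33}) can be estimated as in the proof of Lemma \ref{lem5} using only the uniform bounds of Lemma \ref{lem4} together with $\|div(\widetilde{v}_{\varepsilon}^{\delta})\|_{L^{2}}\le C\sqrt{\delta}\to 0$ (to control the $\frac{1}{2}\int \widetilde{v}_{\varepsilon}^{\delta} div(\widetilde{v}_{\varepsilon}^{\delta})\varphi$ term). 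This gives a bound on $\partial_{t}\widetilde{v}_{\varepsilon}^{\delta}$ in $L^{4/3}(0,\tau;({\cal V}_{0div})')$ independent of $\delta$, so up to a subsequence $\partial_{t}\widetilde{v}_{\varepsilon}^{\delta}\rightharpoonup \partial_{t}\widetilde{v}_{\varepsilon}$ weakly there. Aubin's lemma then yields strong convergence in $L^{2}(0,\tau;{\bf L}^{4}(\Omega))$ and, through an intermediate ${\bf H}^{s}(\Omega)$ with $s\in(1/2,1)$ and the trace theorem, strong convergence in $L^{2}(0,\tau;{\bf L}^{2}(\Gamma_{0}))$.

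I would then pass to the limit in (\ref{NS33}) tested against $\varphi\in {\cal V}_{0}$ and $\chi\in{\cal D}(0,\tau)$, after integration by parts in time. The linear viscous and forcing terms pass by weak convergence; the trilinear terms $b(\widetilde{v}_{\varepsilon}^{\delta},\widetilde{v}_{\varepsilon}^{\delta},\varphi)$, $b(G_{0}\zeta,\widetilde{v}_{\varepsilon}^{\delta}+G_{0}\zeta,\varphi)$ and $b(\widetilde{v}_{\varepsilon}^{\delta},G_{0}\zeta,\varphi)$ pass using strong ${\bf L}^{4}$ convergence combined with weak ${\bf H}^{1}$ convergence; the term $\frac{1}{2}\int \widetilde{v}_{\varepsilon}^{\delta}div(\widetilde{v}_{\varepsilon}^{\delta})\varphi\,dx$ vanishes in the limit because one factor is bounded in $L^{2}(0,\tau;{\bf L}^{4}(\Omega))$ while the other tends to zero strongly in $L^{2}(0,\tau;L^{2}(\Omega))$; the penalty term rewrites as $-(p_{\varepsilon}^{\delta},div(\varphi))\chi$ and converges by (\ref{conver3bis}); and the friction term passes thanks to the Lipschitz continuity of $\Psi'_{\varepsilon}$ from Lemma \ref{lem6} combined with the strong ${\bf L}^{2}(\Gamma_{0})$ convergence of the trace. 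The initial condition is recovered via Simon's lemma applied to $(\widetilde{v}_{\varepsilon}^{\delta})$ together with (\ref{init2}), exactly as at the end of the proof of Theorem \ref{lemme2.3}.

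The main obstacle, in my view, is the derivation of the $\delta$--independent bound on $\partial_{t}\widetilde{v}_{\varepsilon}^{\delta}$ in the dual of ${\cal V}_{0div}$: one has to carefully choose the duality with divergence--free test functions to eliminate the penalty contribution, and verify that the convective correction $\frac{1}{2}\int \widetilde{v}_{\varepsilon}^{\delta}div(\widetilde{v}_{\varepsilon}^{\delta})\varphi\,dx$ is controlled uniformly in $\delta$ (it actually tends to zero thanks to the $\sqrt{\delta}$ decay of the divergence). Everything else is a routine lower semicontinuity/compactness argument combined with the pressure estimate of Lemma \ref{lem-pression}.
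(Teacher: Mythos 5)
Your proposal is correct and follows essentially the same route as the paper: uniform estimates from Lemmas \ref{lem4} and \ref{lem-pression} give the weak/weak-star convergences, a $\delta$-independent bound on $\partial_t \widetilde{v}_{\varepsilon}^{\delta}$ in $L^{\frac{4}{3}}\bigl(0,\tau;({\cal V}_{0div})'\bigr)$ is obtained by testing with divergence-free functions (killing the penalty term), and Aubin's and Simon's lemmas provide the strong convergences needed to pass to the limit term by term, including the identification of the penalty term with $-\bigl(p_{\varepsilon}^{\delta},div(\varphi)\bigr)$ and the vanishing of the correction term $\frac{1}{2}\int_{\Omega}\widetilde{v}_{\varepsilon}^{\delta}\,div(\widetilde{v}_{\varepsilon}^{\delta})\varphi\,dx$. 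The only cosmetic difference is that you invoke the $\sqrt{\delta}$ decay of the divergence to bound that correction term in the time-derivative estimate, whereas the paper bounds it directly by the uniform ${\bf H}^{1}$ estimate as in Lemma \ref{lem5}; both are valid.
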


\begin{proof}
Observing that the estimates obtained in Lemma \ref{lem4} are independent of $m$, $\delta$ and $\varepsilon$, we infer that the sequence $(\widetilde{v}_{\varepsilon }^{\delta}, p_{\varepsilon}^{\delta})_{\delta >0}$ is bounded in $ L^{2}(0,\tau;{\cal V}_{0}) \cap L^{\infty}\bigl(0,\tau;{\bf L}^{2}(\Omega) \bigr) $. Moreover, with Proposition \ref{lem-pression}, the sequence $(p_{\varepsilon}^{\delta})_{\delta >0}$ is bounded in $  H^{-1}\bigl(0,\tau;{\bf L}^{2}(\Omega) \bigr)$ and the convergences (\ref{conver1bis})-(\ref{conver2bis})-(\ref{conver3bis}) follow immediately.

From (\ref{NS37D}) we infer that
\begin{eqnarray*}
\bigl\| div(\widetilde{v}_{\varepsilon }^{\delta}) \bigr\|_{L^{2}(0,\tau; L^{2}(\Omega))} \le C \sqrt{\delta}
\end{eqnarray*}
with a constant $C$ independent of $\delta$ and $\varepsilon$.
Thus
\begin{eqnarray*}
div(\widetilde{v}_{\varepsilon }^{\delta})\rightarrow 0\quad\mbox{strongly in $L^{2} \bigl(0,\tau; L^{2}(\Omega) \bigr)$.}
\end{eqnarray*}
Finally we can obtain an estimate of $\displaystyle \frac{\partial \widetilde{v}_{\varepsilon}^{\delta}}{\partial t}$ in  $L^{\frac{4}{3}}\bigl(0,\tau;({\cal V}_{0div})' \bigr)$ by using the same kind of computations as in Lemma \ref{lem5}. Indeed, let $\varphi \in {\cal V}_{0div}$ and $\chi \in {\cal D} (0, \tau)$. With (\ref{eqP3}) we get
\begin{eqnarray*}
\begin{array}{ll}
\displaystyle
\left\langle \frac{d}{dt} \left( \widetilde{v}_{\varepsilon}^{\delta}, \varphi \right) , \chi\right\rangle_{{\cal D}'(0,\tau), {\cal D}(0,\tau)}  = \int_0^{\tau}  \int_{\Omega} \frac{\partial \widetilde{v}_{\varepsilon}^{\delta}}{\partial t}  \varphi   \chi \, dx dt =
-  \bigl\langle b(\widetilde{v}_{\varepsilon}^{\delta},\widetilde{v}_{\varepsilon}^{\delta},\varphi) ,\chi \bigr\rangle_{{\cal D}'(0,\tau), {\cal D}(0,\tau)} \\
\displaystyle
- \frac{1}{2}\bigl\langle \int_{\Omega}
 \widetilde{v}_{\varepsilon}^{\delta}div(\widetilde{v}_{\varepsilon}^{\delta}) \varphi \, dx , \chi \bigr\rangle_{{\cal D}'(0,\tau), {\cal D}(0,\tau)}
\displaystyle -  a(T;\widetilde{v}_{\varepsilon}^{\delta},\varphi\chi)
-  \bigl\langle \Psi'_{\varepsilon}(\widetilde{v}_{\varepsilon}^{\delta}) , \varphi  \chi \bigr\rangle \\
\displaystyle
+ \bigl\langle (f,\varphi ), \chi \bigr\rangle_{{\cal D}'(0,\tau), {\cal D}(0,\tau)}   - a(T;G_{0}\zeta ,\varphi \chi)
\displaystyle  - \left\langle \left(G_{0}\frac{\partial \zeta}{\partial t},\varphi \right), \chi \right\rangle_{{\cal D}'(0,\tau), {\cal D}(0,\tau)} \\
\displaystyle -  \bigl\langle b(G_{0}\zeta ,\widetilde{v}_{\varepsilon}^{\delta}+G_{0}\zeta,\varphi), \chi \bigr\rangle_{{\cal D}'(0,\tau), {\cal D}(0,\tau)} - \bigl\langle b(\widetilde{v}_{\varepsilon}^{\delta},G_{0}\zeta ,\varphi) , \chi \bigr\rangle_{{\cal D}'(0,\tau), {\cal D}(0,\tau)}.
\end{array}
\end{eqnarray*}
We can estimate all the terms in the right hand side of the previous equality and we obtain
\begin{eqnarray*}
\begin{array}{ll}
\displaystyle{
\left|
 \int_0^{\tau}  \int_{\Omega} \frac{\partial \widetilde{v}_{\varepsilon}^{\delta}}{\partial t}  \varphi  \chi \, dx dt
\right|
\leq
\left( \frac{ \sqrt{3} }{2}+1 \right) c \int_0^\tau  \|\widetilde{v}_{\varepsilon }^{\delta}\|_{{\bf L}^{2}(\Omega)}^{\frac{1}{2}}
\| \widetilde{v}_{\varepsilon }^{\delta}\|_{{\bf H}^{1}(\Omega)}^{\frac{3}{2}}
\|\varphi\|_{{\bf H}^{1}(\Omega)} |\chi| \, dt
}
\\
\displaystyle{
 + \mu_{*} \int_0^\tau \|\widetilde{v}_{\varepsilon }^{\delta}\|_{{\bf H}^{1}(\Omega)}\|\varphi\|_{{\bf H}^{1}(\Omega)} |\chi| \, dt
    }
\displaystyle{+ \tilde c \int_0^\tau \| \ell\|_{{\bf L}^{2}(\Gamma_0)}
\|\varphi\|_{{\bf H}^{1}(\Omega)} |\chi| \, dt } \\
\displaystyle{+ \int_0^{\tau} \left[ \|f\|_{{\bf L}^{2}(\Omega)}+\mu_{*}|\zeta|\|G_{0}\|_{{\bf H}^{1}(\Omega)}+   \left|\frac{\partial\zeta}{\partial t}\right|\|G_{0}\|_{{\bf L}^{2}(\Omega)}\right] \|\varphi\|_{{\bf H}^{1}(\Omega)} |\chi| \, dt } \\
\displaystyle{+ \int_0^{\tau} \left[ K^{2}|\zeta|\|G_{0}\|_{{\bf H}^{1}(\Omega)}\|\widetilde{v}_{\varepsilon }^{\delta}+ G_{0}\zeta\|_{{\bf H}^{1}(\Omega)}+K^{2}|\zeta|\|\widetilde{v}_{\varepsilon }^{\delta}\|_{{\bf H}^{1}(\Omega)}\|G_{0}\|_{{\bf H}^{1}(\Omega)}\right] \|\varphi\|_{{\bf H}^{1}(\Omega)} |\chi| \, dt .}
\end{array}
\end{eqnarray*}
Observing that
\begin{eqnarray*}
\begin{array}{ll}
\displaystyle
\int_0^{\tau } \|\widetilde{v}_{\varepsilon }^{\delta}\|_{{\bf L}^{2}(\Omega)}^{\frac{1}{2}}
\| \widetilde{v}_{\varepsilon }^{\delta}\|_{{\bf H}^{1}(\Omega)}^{\frac{3}{2}}
\|\varphi\|_{{\bf H}^{1}(\Omega)} |\chi| \, dt
\le\\
\displaystyle \le \|\widetilde{v}_{\varepsilon }^{\delta}\|_{L^{\infty}(0, \tau; {\bf L}^{2}(\Omega)) }^{\frac{1}{2}}
\left(  \int_0^\tau  \| \widetilde{v}_{\varepsilon }^{\delta}\|_{{\bf H}^{1}(\Omega)}^{2} \, dt  \right)^{\frac{3}{4} }
\|\varphi\|_{{\bf H}^{1}(\Omega)} \|\chi \|_{L^4(0,\tau)} \\
\displaystyle  \le \|\widetilde{v}_{\varepsilon }^{\delta}\|_{L^{\infty}(0, \tau; {\bf L}^{2}(\Omega)) }^{\frac{1}{2}}
\|\widetilde{v}_{\varepsilon }^{\delta}\|_{L^{2}(0, \tau; {\bf H}^{1}(\Omega)) }^{\frac{3}{2}}
\|\varphi \chi\|_{L^4(0, \tau; {\bf H}^{1}(\Omega))}
\end{array}
\end{eqnarray*}
and reminding that $(\widetilde{v}_{\varepsilon }^{\delta})_{\delta>0}$ is bounded in $L^{2}\bigl(0,\tau;{\bf H}^1(\Omega) \bigr) \cap L^{\infty}\bigl(0,\tau;{\bf L}^{2}(\Omega) \bigr)$ independently of  $\delta$ and $\varepsilon$, we infer that
\begin{eqnarray}\label{eq47div}
\left\| \frac{\partial \widetilde{v}_{\varepsilon}^{\delta}}{\partial t} \right\|_{L^{\frac{4}{3}}(0,\tau;({\cal V}_{0div})' )}\leq C
\end{eqnarray}
with a constant $C$ independent of $\delta$ and $\varepsilon$.

It follows that, possibly extracting another subsequence still denoted $(\widetilde{v}_{\varepsilon }^{\delta})_{\delta>0}$,  we have
\begin{eqnarray}
\label{convp}
\frac{\partial \widetilde{v}_{\varepsilon}^{\delta}}{\partial t}  \rightharpoonup \frac{\partial \widetilde{v}_{\varepsilon}}{\partial t} \quad \mbox{weakly in $L^{\frac{4}{3}} \bigl(0,\tau;({\cal V}_{0div})' \bigr)$.}
\end{eqnarray}
By using Aubin's lemma, with $X_{0}={\cal V}_{0}$, $X={\bf L}^4(\Omega)$  and $X_{1}=({\cal V}_{0div })'$, we obtain
\begin{eqnarray*}
\widetilde{v}_{\varepsilon}^{\delta}\rightarrow \widetilde{v}_{\varepsilon}\quad\mbox{strongly in $L^{2}\bigl(0,\tau;{\bf L}^{4}(\Omega) \bigr)$}
\end{eqnarray*}
and, with $X_{0}={\cal V}_{0}$, $X={\bf H}^s(\Omega)$, $\displaystyle \frac{1}{2} <s<1$,   and $X_{1}=({\cal V}_{0div })'$,
\begin{eqnarray*}
\widetilde{v}_{\varepsilon}^{\delta}\rightarrow \widetilde{v}_{\varepsilon} \quad\mbox{strongly in $ L^{2} \bigl(0,\tau;{\bf H}^{s}(\Omega) \bigr)$.}
\end{eqnarray*}
Hence
\begin{eqnarray*}
\widetilde{v}_{\varepsilon}^{\delta}\rightarrow \widetilde{v}_{\varepsilon} \quad\mbox{strongly in $ L^{2} \bigl(0,\tau;{\bf L}^{2}(\Gamma_0) \bigr)$.}
\end{eqnarray*}
Finally, using  (\ref{conver2bis})-(\ref{convp}) and  Simon's lemma, and  possibly extracting another subsequence still  denoted  $(\widetilde{v}_{\varepsilon }^{\delta})_{\delta >0}$,   we obtain
\begin{eqnarray*}
\widetilde{v}_{\varepsilon }^{\delta}\rightarrow \widetilde{v}_{\varepsilon} \quad\mbox{strongly in ${\cal C}^{0}(0,\tau;H)$}
\end{eqnarray*}
for any Banach space $H$  such that ${\bf L}^{2}(\Omega)\subset H \subset ({\cal V}_{0div})'$ with continuous injections and compact  embedding of ${\bf L}^{2}(\Omega)$ into $H$.

With all these convergences and with the assumption (\ref{init2}), we can pass to the limit in (\ref{NS33}) and (\ref{NS33init}) by the same techniques as in Theorem \ref{lemme2.3} and we get (\ref{NS-25bis}) and (\ref{NS-25bisinit}).
$\Box$
\end{proof}

Now, observing that $\Psi_{\varepsilon}$ is convex, we obtain that
\begin{eqnarray*}
\Psi_{\varepsilon}(\widetilde{v}_{\varepsilon} + \varphi \chi ) - \Psi_{\varepsilon}(\widetilde{v}_{\varepsilon})
\ge  \bigl\langle \Psi'_{\varepsilon}(\widetilde{v}_{\varepsilon}), \varphi  \chi \bigr\rangle \quad \forall \varphi \in {\cal V}_0, \ \forall \chi \in {\cal D}(0, \tau)
\end{eqnarray*}
and in (\ref{NS-25bis}) we get
\begin{equation} \label{NS-25ter}
\begin{array}{ll}
\displaystyle
\left\langle \frac{d}{dt} \left( \widetilde{v}_{\varepsilon}, \varphi \right) , \chi\right\rangle_{{\cal D}'(0,\tau), {\cal D}(0,\tau)}  +
 \bigl\langle b(\widetilde{v}_{\varepsilon},\widetilde{v}_{\varepsilon},\varphi) ,\chi \bigr\rangle_{{\cal D}'(0,\tau), {\cal D}(0,\tau)} - \bigl\langle \bigl(p_{\varepsilon},div(\varphi) \bigr), \chi \bigr\rangle_{{\cal D}'(0,\tau), {\cal D}(0,\tau)}
  \\
\displaystyle
+ a(T;\widetilde{v}_{\varepsilon},\varphi\chi)
+   \Psi_{\varepsilon}(\widetilde{v}_{\varepsilon}+ \varphi  \chi)
-  \Psi_{\varepsilon}(\widetilde{v}_{\varepsilon})
\displaystyle  \ge
\bigl\langle (f,\varphi ), \chi \bigr\rangle_{{\cal D}'(0,\tau), {\cal D}(0,\tau)} - a(T;G_{0}\zeta ,\varphi \chi)
\\
\displaystyle - \left\langle \left(G_{0}\frac{\partial \zeta}{\partial t},\varphi \right), \chi \right\rangle_{{\cal D}'(0,\tau), {\cal D}(0,\tau)}
 -  \bigl\langle b(G_{0}\zeta ,\widetilde{v}_{\varepsilon}+G_{0}\zeta,\varphi), \chi \bigr\rangle_{{\cal D}'(0,\tau), {\cal D}(0,\tau)} \\
\displaystyle - \bigl\langle b(\widetilde{v}_{\varepsilon},G_{0}\zeta ,\varphi) , \chi \bigr\rangle_{{\cal D}'(0,\tau), {\cal D}(0,\tau)}
\end{array}
\end{equation}
for all $\varphi\in {\cal V}_{0}$ and for all $ \chi\in {\cal D}(0,\tau)$, with the initial condition
\begin{eqnarray}
\label{NS-25terinit}
\widetilde{v}_{\varepsilon}(0, \cdot) = \widetilde{v}_{{\varepsilon} 0} .
\end{eqnarray}
In order to pass to the limit as $\varepsilon$ tends to zero in the previous inequality, we use the following lemma.

\begin{lemma} \label{lem8}
Let $(w_{\varepsilon})_{\varepsilon >0}$ be a sequence of $L^2 \bigl( 0, \tau; {\bf L}^2(\Gamma_0) \bigr)$ and $w \in L^2 \bigl( 0, \tau; {\bf L}^2(\Gamma_0) \bigr)$ such that  $(w_{\varepsilon})_{\varepsilon >0}$ converges strongly to $w$ in $L^2 \bigl( 0, \tau; {\bf L}^2(\Gamma_0) \bigr)$. Then
$\displaystyle \lim_{\varepsilon \to 0} \Psi_{\varepsilon}( w_{\varepsilon}) = \Psi(w)$.
\end{lemma}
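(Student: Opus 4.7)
The plan is to use a triangle inequality splitting:
\begin{eqnarray*}
\bigl| \Psi_{\varepsilon}(w_{\varepsilon}) - \Psi(w) \bigr| \le \bigl| \Psi_{\varepsilon}(w_{\varepsilon}) - \Psi(w_{\varepsilon}) \bigr| + \bigl| \Psi(w_{\varepsilon}) - \Psi(w) \bigr|
\end{eqnarray*}
and to control the two pieces independently. This reduces matters to a pointwise comparison of $\sqrt{\varepsilon^2 + |u|^2}$ with $|u|$, plus the (essentially standard) continuity of $\Psi$ for the strong $L^2$ topology.

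For the first term, I would use the elementary identity
\begin{eqnarray*}
0 \le \sqrt{\varepsilon^2 + |u|^2} - |u| = \frac{\varepsilon^2}{\sqrt{\varepsilon^2 + |u|^2} + |u|} \le \varepsilon
\end{eqnarray*}
valid for every $u \in \mathbb{R}^d$. Integrating against $\ell \ge 0$ and using $\ell \in L^{\infty}\bigl(0, \tau; {\bf L}^{\infty}_+(\Gamma_0) \bigr)$ together with the boundedness of $\Gamma_0$ and of $(0, \tau)$, this gives
\begin{eqnarray*}
\bigl| \Psi_{\varepsilon}(w_{\varepsilon}) - \Psi(w_{\varepsilon}) \bigr|
\le \varepsilon \int_0^{\tau} \int_{\Gamma_0} \ell \, dx' dt
\le \varepsilon \, \tau \, \mathrm{mes}(\Gamma_0) \, \|\ell\|_{L^{\infty}(0, \tau; {\bf L}^{\infty}(\Gamma_0))},
\end{eqnarray*}
which tends to $0$ as $\varepsilon \to 0$, uniformly in $w_{\varepsilon}$.

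For the second term, the reverse triangle inequality $\bigl| |w_{\varepsilon}| - |w| \bigr| \le |w_{\varepsilon} - w|$ together with Cauchy--Schwarz and the hypothesis $\ell \in L^2 \bigl(0, \tau; {\bf L}^2_+(\Gamma_0) \bigr)$ yields
\begin{eqnarray*}
\bigl| \Psi(w_{\varepsilon}) - \Psi(w) \bigr|
\le \int_0^{\tau} \int_{\Gamma_0} \ell \, |w_{\varepsilon} - w| \, dx' dt
\le \|\ell\|_{L^2(0, \tau; {\bf L}^2(\Gamma_0))} \, \|w_{\varepsilon} - w\|_{L^2(0, \tau; {\bf L}^2(\Gamma_0))},
\end{eqnarray*}
which tends to $0$ by the strong convergence assumption. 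Combining the two bounds concludes the proof.

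I do not expect any serious obstacle here: the result is essentially a uniform-in-$u$ estimate $\bigl| \Psi_{\varepsilon}(u) - \Psi(u) \bigr| = O(\varepsilon)$ coupled with the Lipschitz-type continuity of $\Psi$. The only point to keep in mind is that one must use the $L^{\infty}$ regularity of $\ell$ (not just the $L^2$ regularity) to obtain the first estimate in a form independent of $w_{\varepsilon}$, whereas the $L^2$ regularity of $\ell$ suffices for the continuity of $\Psi$.
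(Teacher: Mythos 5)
Your proof is correct and follows essentially the same route as the paper: the same splitting of $\Psi_{\varepsilon}(w_{\varepsilon}) - \Psi(w)$ into a regularization-error term and a continuity term, the same pointwise bound $0 \le \sqrt{\varepsilon^{2}+|u|^{2}}-|u|\le\varepsilon$, and the same Cauchy--Schwarz estimate $\bigl|\Psi(w_{\varepsilon})-\Psi(w)\bigr| \le \|\ell\|_{L^{2}(0,\tau;{\bf L}^{2}(\Gamma_0))}\,\|w_{\varepsilon}-w\|_{L^{2}(0,\tau;{\bf L}^{2}(\Gamma_0))}$ for the continuity part. The only difference is immaterial but your comment on it is not accurate: for the term $\int_0^{\tau}\int_{\Gamma_0}\ell\,\varepsilon\,dx'\,dt$ you invoke $\ell \in L^{\infty}\bigl(0,\tau;{\bf L}^{\infty}_+(\Gamma_0)\bigr)$ to get the bound $\varepsilon\,\tau\,{\rm meas}(\Gamma_0)\,\|\ell\|_{L^{\infty}(0,\tau;{\bf L}^{\infty}(\Gamma_0))}$, whereas the paper simply applies Cauchy--Schwarz again to get $\varepsilon\,\|\ell\|_{L^{2}(0,\tau;{\bf L}^{2}(\Gamma_0))}\sqrt{\tau\,{\rm meas}(\Gamma_0)}$, so---contrary to your closing remark---the $L^{\infty}$ regularity of $\ell$ is not needed anywhere in this lemma: the $L^{2}$ regularity (indeed even $L^{1}$, since the domain is bounded) suffices for both terms.
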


\begin{proof}
Let $\varepsilon >0$. By definition of $\Psi_ {\varepsilon}$ and $\Psi$, we have
\begin{eqnarray*}
\Psi_{\varepsilon}( w_{\varepsilon}) - \Psi (w) = \int_0^{\tau} \int_{\Gamma_0} \ell \bigl( |w_{\varepsilon}| - |w| \bigr) \, dx' dt
+ \int_0^{\tau} \int_{\Gamma_0} \ell \bigl( \sqrt{ \varepsilon^2 + |w_{\varepsilon}|^2} - |w_{\varepsilon}| \bigr) \, dx' dt.
\end{eqnarray*}
It follows that
\begin{eqnarray*}
 \bigl| \Psi_{\varepsilon}( w_{\varepsilon}) - \Psi (w)  \bigr| & \le  &
\int_0^{\tau} \int_{\Gamma_0} \ell \bigl| |w_{\varepsilon}| - |w| \bigr| \, dx' dt
+ \int_0^{\tau} \int_{\Gamma_0} \ell \varepsilon  \, dx' dt  \\
& \le &  \|\ell\|_{L^2(0, \tau; {\bf L}^2(\Gamma_0))} \bigl( \| w_{\varepsilon} - w \|_{L^2(0, \tau; {\bf L}^2(\Gamma_0))} + \varepsilon \sqrt{\tau {\rm meas}(\Gamma_0)}  \bigr)
\end{eqnarray*}
which allows us to conclude. $\Box$
\end{proof}


Now we can prove that problem $(P)$ admits a solution.

\begin{theorem}\label{thm3}
Assume that $(\widetilde{v}_{\varepsilon 0}^{\delta})_{\varepsilon>0, \delta>0}$ is a bounded sequence of   ${\bf L}^{2}(\Omega)$. Assume moreover that (\ref{NS14L}), (\ref{NS20}), (\ref{G0HYPO}) and (\ref{init1}) hold.
Then, there exists a subsequence of $(\widetilde{v}_{\varepsilon }, p_{\varepsilon})_{\varepsilon >0}$, still denoted $(\widetilde{v}_{\varepsilon }, p_{\varepsilon})_{\varepsilon >0}$ such that
\begin{eqnarray}\label{conver2ter}
\displaystyle{
\widetilde{v}_{\varepsilon } \rightharpoonup \widetilde{v} \quad\mbox{weakly star in $L^{\infty}\bigl(0,\tau;{\bf L}^{2}(\Omega) \bigr)$}}
\end{eqnarray}
\begin{eqnarray}\label{conver1ter}
\displaystyle{
\widetilde{v}_{\varepsilon } \rightharpoonup \widetilde{v} \quad\mbox{weakly in $ L^{2}(0,\tau;{\cal V}_{0}) $ }}
\end{eqnarray}
\begin{eqnarray}\label{conver3ter}
\displaystyle{
\widetilde{p}_{\varepsilon } \rightharpoonup \widetilde{p}  \quad\mbox{weakly  in $H^{-1}\bigl(0,\tau;{ L}^{2}_0(\Omega) \bigr)$}}
\end{eqnarray}
and $(\widetilde{v} , p )$ is solution of $(P )$. Furthermore $\displaystyle \frac{\partial \widetilde{v}}{\partial t}$ belongs to $L^{\frac{4}{3}}\bigl(0,\tau;({\cal V}_{0div})' \bigr)$.
\end{theorem}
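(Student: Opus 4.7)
The plan is to mirror the structure of the proof of Theorem \ref{prop1}, now passing to the limit $\varepsilon \to 0$ starting from the sequence of solutions $(\widetilde{v}_{\varepsilon}, p_{\varepsilon})$ of $(P_{\varepsilon})$. The first observation is that \emph{all} the a priori bounds obtained so far are independent of $\varepsilon$: the estimates (\ref{NS36})--(\ref{NS37}) of Lemma \ref{lem4} yield uniform boundedness of $\widetilde{v}_{\varepsilon}$ in $L^{2}(0,\tau;\mathcal{V}_{0}) \cap L^{\infty}(0,\tau;\mathbf{L}^{2}(\Omega))$, and Lemma \ref{lem-pression} gives the uniform bound (\ref{estim-pression}) on $p_{\varepsilon}$ in $H^{-1}(0,\tau;L^{2}_{0}(\Omega))$, so that (\ref{conver2ter})--(\ref{conver3ter}) follow by weak/weak-* compactness. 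Since $\widetilde{v}_{\varepsilon} \in L^2(0,\tau;\mathcal{V}_{0\text{div}})$, the limit $\widetilde{v}$ is automatically divergence-free.

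Next I would redo the computation of Lemma \ref{lem5}, but now testing against $\varphi \in \mathcal{V}_{0\text{div}}$ (which kills the pressure and penalty terms), exactly as in (\ref{eq47div}), to obtain a uniform bound on $\partial_t \widetilde{v}_{\varepsilon}$ in $L^{4/3}(0,\tau;(\mathcal{V}_{0\text{div}})')$. Aubin's lemma then gives, up to a subsequence, strong convergence of $\widetilde{v}_{\varepsilon}$ to $\widetilde{v}$ in $L^{2}(0,\tau;\mathbf{L}^{4}(\Omega))$ and in $L^{2}(0,\tau;\mathbf{H}^{s}(\Omega))$ for some $s \in (1/2,1)$, whence by the trace theorem
\begin{eqnarray*}
\widetilde{v}_{\varepsilon} \longrightarrow \widetilde{v} \quad \text{strongly in } L^{2}\bigl(0,\tau;\mathbf{L}^{2}(\Gamma_{0})\bigr),
\end{eqnarray*}
and Simon's lemma provides strong convergence in $\mathcal{C}^0(0,\tau;H)$ for a suitable intermediate space $H$, which together with (\ref{init1}) handles the initial condition $\widetilde{v}(0,\cdot) = \widetilde{v}_0$.

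The last step is to pass to the limit in the inequality (\ref{NS-25ter}) with $\varphi \in \mathcal{V}_0$ and $\chi \in \mathcal{D}(0,\tau)$ fixed. The linear terms (time derivative, $a(T;\cdot,\cdot)$, pressure duality, and the right-hand side) pass by the weak convergences (\ref{conver2ter})--(\ref{conver3ter}); the convective terms $b(\widetilde{v}_{\varepsilon},\widetilde{v}_{\varepsilon},\varphi)$, $b(G_0\zeta,\widetilde{v}_{\varepsilon},\varphi)$ and $b(\widetilde{v}_{\varepsilon},G_0\zeta,\varphi)$ pass thanks to strong convergence of $\widetilde{v}_{\varepsilon}$ in $L^{2}(0,\tau;\mathbf{L}^{4}(\Omega))$ combined with weak convergence in $L^{2}(0,\tau;\mathcal{V}_{0})$. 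The one nontrivial point, which I expect to be the main obstacle, is the treatment of the nondifferentiable friction term: I will apply Lemma \ref{lem8} to the strongly convergent sequences $\widetilde{v}_{\varepsilon}$ and $\widetilde{v}_{\varepsilon}+\varphi\chi$ in $L^{2}(0,\tau;\mathbf{L}^{2}(\Gamma_{0}))$ to conclude
\begin{eqnarray*}
\Psi_{\varepsilon}(\widetilde{v}_{\varepsilon}+\varphi\chi) - \Psi_{\varepsilon}(\widetilde{v}_{\varepsilon}) \longrightarrow \Psi(\widetilde{v}+\varphi\chi) - \Psi(\widetilde{v}),
\end{eqnarray*}
so that the inequality (\ref{NS-25ter}) yields exactly (\ref{NS-25}) in the limit. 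The regularity $\partial_t \widetilde{v} \in L^{4/3}(0,\tau;(\mathcal{V}_{0\text{div}})')$ is inherited from the uniform bound (\ref{eq47div}), which completes the identification of $(\widetilde{v},p)$ as a solution of $(P)$.
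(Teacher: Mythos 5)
Your proposal is correct and follows essentially the same route as the paper's own proof: uniform bounds from (\ref{NS36})--(\ref{NS37}), (\ref{estim-pression}) and (\ref{eq47div}) giving the weak convergences, then Aubin's lemma, the trace theorem and Simon's lemma for the strong convergences needed to handle the convective terms and the initial condition, and finally Lemma \ref{lem8} applied to $\widetilde{v}_{\varepsilon}$ and $\widetilde{v}_{\varepsilon}+\varphi\chi$ to pass from $\Psi_{\varepsilon}$ to $\Psi$ in the inequality (\ref{NS-25ter}). The only cosmetic difference is that you re-derive the $L^{4/3}(0,\tau;({\cal V}_{0div})')$ bound on $\partial_t\widetilde{v}_{\varepsilon}$ by testing with divergence-free functions, whereas the paper simply invokes the $\delta$- and $\varepsilon$-independent estimate (\ref{eq47div}); the two computations are the same.
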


\begin{proof}
 Recalling that the estimates (\ref{NS36})-(\ref{NS37}) are independent of $m$, $\delta$ and $\varepsilon$, we deduce that $(\widetilde{v}_{\varepsilon})_{\varepsilon >0}$ is bounded in $L^{2}\bigl(0,\tau ;{\bf H}^1(\Omega) \bigr) \cap L^{\infty} \bigl(0,\tau;{\bf L}^{2}(\Omega) \bigr)$. Moreover, since the estimate (\ref{estim-pression}) is independent of $\delta$ and $\varepsilon$, the sequence $(p_{\varepsilon})_{\varepsilon >0}$ is bounded in $H^{-1} \bigl(0,\tau; L^2(\Omega) \bigr)$ and we may infer the convergences (\ref{conver2ter})-(\ref{conver1ter})-(\ref{conver3ter}).
Furthermore  the estimate (\ref{eq47div}) implies that that $\displaystyle \left( \frac{\partial \widetilde{v}_{\varepsilon}}{\partial t} \right)_{\varepsilon >0}$ is bounded in  $L^{\frac{4}{3}}\bigl(0,\tau; ({\cal V}_{0div})' \bigr)$. Hence, possibly extracting another subsequence still denoted $(\widetilde{v}_{\varepsilon})_{\varepsilon >0}$, we have
\begin{eqnarray*}
\frac{\partial \widetilde{v}_{\varepsilon} }{\partial t}  \rightharpoonup \frac{\partial \widetilde{v} }{\partial t} \quad \mbox{weakly in $L^{\frac{4}{3}} \bigl(0,\tau;({\cal V}_{0div})' \bigr)$}
\end{eqnarray*}
and with the same arguments as in the previous Theorem, we get
\begin{eqnarray*}
\widetilde{v}_{\varepsilon} \rightarrow \widetilde{v} \quad\mbox{strongly in $L^{2}\bigl(0,\tau;{\bf L}^{4}(\Omega) \bigr)$,}
\end{eqnarray*}
\begin{eqnarray*}
\widetilde{v}_{\varepsilon} \rightarrow \widetilde{v} \quad\mbox{strongly in $ L^{2} \bigl(0,\tau;{\bf L}^{2}(\Gamma_0) \bigr)$,}
\end{eqnarray*}
and
\begin{eqnarray*}
\widetilde{v}_{\varepsilon }^{\delta}\rightarrow \widetilde{v}_{\varepsilon} \quad\mbox{strongly in ${\cal C}^{0}(0,\tau;H)$,}
\end{eqnarray*}
for any Banach space $H$  such that ${\bf L}^{2}(\Omega)\subset H \subset ({\cal V}_{0div})'$ with continuous injections and compact  embedding of ${\bf L}^{2}(\Omega)$ into $H$.

With all these convergences and  the assumption (\ref{init1}), we can pass to the limit in (\ref{NS-25ter}) and (\ref{NS-25terinit}) by the same techniques as in Theorem \ref{lemme2.3} and Theorem \ref{prop1} and we get (\ref{NS-25}) and (\ref{NS-25init}).
$\Box$
\end{proof}

\end{document}